\numberwithin{equation}{section}
\def\beq{\begin{equation}}
\def\eeq{\end{equation}}
\def\crash#1{}
\def\Z{{\mathbb Z}}
\def\Q{{\mathbb Q}}
\def\C{{\mathbb C}}
\def\l{\left}
\def\r{\right}
\def\[[{\l[\l[}
\def\]]{\r]\r]}
\def\p{\prime}
\def\sgq{\sigma_q}
\def\cf{\emph{cf. }}
\def\ie{\emph{i.e. }}
\def\ds{\displaystyle}
\def\cC{{\mathcal C}}
\def\cE{{\mathcal E}}
\def\cF{{\mathcal F}}
\def\cK{{\mathcal K}}
\def\cM{{\mathcal M}}
\def\cL{{\mathcal{ L}}}
\def\cP{{\mathcal P}}
\def\cR{{\mathcal R}}
\def\wtilde{\widetilde}
\def\a{\alpha}
\def\de{\delta}
\def\ga{\gamma}
\def\sg{\sigma}
\def\Ga{\Gamma}
\def\la{\lambda}
\def\Sg{\Sigma}
\def\De{\Delta}
\def\deg{\mathop{\rm deg}}
\author{Lucia Di Vizio}
\date{\today}
\title{Approche galoisienne de la transcendance différentielle}
\begin{document}

\bibliographystyle{plain}

\maketitle

\tableofcontents

%%%%%%%%%%%%%%%%%%%%%%%%%%%%%%%%%%%%%%%%%%%%%%%%%%%%%%%%%%%%%%%%%%%
%%%%%%%%%%%%%%%%%%%%%%%%%%%%%%%%%%%%%%%%%%%%%%%%%%%%%%%%%%%%%%%%%%%
\section*{Introduction}
%%%%%%%%%%%%%%%%%%%%%%%%%%%%%%%%%%%%%%%%%%%%%%%%%%%%%%%%%%%%%%%%%%%
%%%%%%%%%%%%%%%%%%%%%%%%%%%%%%%%%%%%%%%%%%%%%%%%%%%%%%%%%%%%%%%%%%%

Le théorème de Hölder dit que la fonction Gamma d'Euler n'est pas solution d'une équation différentielle
algébrique à coefficients dans le corps $\C(x)$ des fonctions rationnelles à
coefficients complexes. Une littérature relativement vaste est dédiée à cet énoncé, à la fois dans le but d'en donner
une nouvelle preuve et de le généraliser.
\par
Le théorème de Hölder a été le premier résultat de
\emph{transcendance différentielle} ou d'\emph{hypertranscendance}.
En théorie de la transcendance, on s'intéresse à la transcendance des fonctions sur $\Q(x)$, souvent pour en déduire
la transcendance de leurs valeurs spéciales.
En hypertranscendance on s'intéresse
à la transcendance simultanée des fonctions et de toutes leurs dérivées, donc à la
propriété d'un ensemble de fonctions de ne pas être solution d'une équation différentielle algébrique.
Depuis le théorème de Hölder,  les mathématiciens ont étudié l'hypertranscendance
pour les raisons le plus diverses.
On en évoquera quelques unes tout au long de ce papier, et en particulier dans le dernier paragraphe.
\par
Récemment une théorie de Galois paramétrée des équations fonctionnelles a vu le jour (\cf \cite{cassisinger} et \cite{HardouinSinger}),
avec le bout
de fournir une approche systématique à la transcendance différentielle, en contraste avec la littérature
plus ancienne où on traite les différentes fonctions spéciales ``au cas par cas''.
On va donner un bref survol de cette théorie et en montrer quelques applications en révisitant des
résultats classiques, tel le théorème de Hölder, par exemple.

\medskip
\emph{Remerciements.}
C'est un plaisir de remercier W. Bergweiler, D. Bertrand, X. Buff,
C. Hardouin, B.Q. Li, Pierre Nguyen, M.F. Singer, Z. Ye pour leurs suggestions et commentaires.
Je suis particulièrement reconnaissante à D. Bertrand, Z. Djadli, D. Harari, C. Hardouin,
M.F. Singer pour leur relecture attentive du manuscrit et leur remarques et corrections et a` J.-P. Allouche pour son invitation.
%%%%%%%%%%%%%%%%%%%%%%%%%%%%%%%%%%%%%%%%%%%%%%%%%%%%%%%%%%%%%%%%%%%
%%%%%%%%%%%%%%%%%%%%%%%%%%%%%%%%%%%%%%%%%%%%%%%%%%%%%%%%%%%%%%%%%%%
\section{Fonction Gamma et théorème de Hölder}
\label{sec:Holder}
%%%%%%%%%%%%%%%%%%%%%%%%%%%%%%%%%%%%%%%%%%%%%%%%%%%%%%%%%%%%%%%%%%%
%%%%%%%%%%%%%%%%%%%%%%%%%%%%%%%%%%%%%%%%%%%%%%%%%%%%%%%%%%%%%%%%%%%

En 1729, Euler, dans une lettre à Goldbach \cite{eulerletter}, définit la fonction Gamma grâce aux limites suivantes,
convergentes pour tout
$x\in\C\smallsetminus\Z_{<0}$:
$$
\begin{array}{rcl}
\Ga(x)
&=&\ds\frac{1}{x}\prod_{n=1}^\infty
\l[\l(1+\frac{1}{n}\r)^x\l(1+\frac{x}{n}\r)^{-1}\r]\\~\\
&=&\ds\lim_{n\to\infty}\frac{(n-1)!}{x(x+1)\cdots(x+n-1)}n^x.
\end{array}
$$
Weierstrass en a donné une autre caractérisation qui montre que $1/\Ga$ est une fonction analytique entière:
$$
\frac{1}{\Ga(x)}=xe^{\ga x}\prod_{n=1}^\infty
\l(1+\frac{x}{n}\r)e^{-\frac x n},
$$
où $\gamma$ est la constante d'Euler-Mascheroni:
$$
\ga=\lim_{m\to\infty}
\l(1+\frac{1}{2}+\dots+\frac{1}{m}-\log m\r)=0,5772157....
$$
Une propriété fondamentale de la fonction $\Gamma$ est celle de vérifier l'équation aux différences
\beq\label{eq:ga}
y(x+1)=xy(x)
\eeq
qui, compte tenu du fait que $\Ga(1)=1$,
implique immédiatement que $\Ga(n)=(n-1)!$, pour tout entier positif $n$.
D'après le théorème de Bohr-Mollerup, la fonction $\Gamma$ est l'unique solution
de l'équation $y(x+1)=xy(x)$, logarithmiquement convexe et telle que y(1)=1.
L'équation \eqref{eq:ga} est linéaire, donc l'ensemble de toutes ses solutions méromorphes
forme un espace vectoriel engendré par $\Ga$ sur le corps des fonctions $1$-périodiques.
\footnote{Pour une présentation des formules classiques sur la fonction
Gamma et pour des références précises à la littérature plus ancienne, voir \cite[\S XII]{whittakerwatson}.}
\par
Le célèbre théorème de Hölder \cite{holderGamma} affirme que
la fonction $\Gamma$ est \emph{différentiellement transcendante} (ou \emph{hypertranscendante}) sur $\C(x)$,
c'est-à-dire:

\begin{theo}
La fonction $\Gamma$ n'est pas solution d'une équation différentielle algébrique à
coefficients dans $\C(x)$.
\end{theo}

Il existe nombreuses preuves de ce résultat (voir par exemple
\cite{owstroskiGamma1}, \cite{owstroskiGamma2}, \cite{hausdorffGamma}, \cite{BankKaufamannGamma2},
\cite{MooreGamma}, \cite{Nielsen}) et aussi
des nombreuses généralisations dans des directions différentes
(voir par exemple \cite{BankKaufamanGamma}, \cite{BankGammaHyptrFunctions}, \cite{BankKaufamnnGammaNevalinna}, \cite{GrossOsgood},
\cite{Nishizawa1}, \cite{Nishizawa2}, \cite{Miller}, \cite{Pastro}).
Parmi ces démonstrations on citera celle de Bank et Kaufmann \cite{BankKaufamannGamma2}, qui se déduit du théorème suivant:

\begin{theo}\label{theo:BankKaukmannGamma}
Soit $\cF$ un sous-corps du corps $\cM er(\C)$
des fonctions méromorphes sur $\C$, contenant $\C(x)$ et fermé par rapport
à l'opérateur de translation $\tau:f(x)\mapsto f(x+1)$ et à la dérivation par rapport à $x$.
Si la fonction $\Gamma$ d'Euler est solution d'une équation différentielle algébrique
à coefficients dans $\cF$, alors, il existe
$g,f_0,f_1,\dots,f_n\in \cF$, avec $f_0,f_1,\dots,f_n$ périodiques de période $1$,
non tous nuls, telles que
$$
\sum_{i=0}^n f_i(x)\frac{d^i}{dx^i}\l(\frac{1}{x}\r)=g(x)-g(x+1).
$$
\end{theo}

\begin{rema}\label{rema:implicationfacile}
La réciproque de cet énoncé est quasiment vraie.
Soit $\cF\langle\Gamma(x)\rangle_{\frac{d}{dx}}$
la plus petite extension de $\cF(\Gamma(x))$ contenue dans $\cM er(\C)$, fermée par rapport à la dérivation
$\frac{d}{dx}$.
Si on suppose que le corps différentiel $\cF\langle\Gamma(x)\rangle_{\frac{d}{dx}}$
ne contient pas plus de fonctions périodiques que
$\cF$, alors la réciproque du théorème ci-dessus est immédiate.
En effet, soit $\psi(x):=\frac{\Gamma^\p(x)}{\Gamma(x)}$ la dérivée logarithmique de $\Gamma(x)$,
qu'on appelle usuellement \emph{fonction digamma}.
On a:
$$
\begin{array}{rcl}
\lefteqn{\tau\l(\sum_{i=0}^n f_i(x)\frac{d^i\psi}{dx^i}(x)+g(x)\r)}\\
&=&\ds\sum_{i=0}^n f_i(x)\frac{d^i\psi}{dx^i}(x)
+\sum_{i=0}^n f_i(x)\frac{d^i}{dx^i}\l(\frac{1}{x}\r)+g(x+1)\\
&=&\ds\sum_{i=0}^n f_i(x)\frac{d^i\psi}{dx^i}(x)+g(x).
\end{array}
$$
Ceci implique que $\sum_{i=0}^n f_i(x)\frac{d^i\psi}{dx^i}(x)+g(x)$ est une fonction
périodique de $\cF$ et nous fournit gratuitement une relation différentielle algébrique sur $\cF$
pour $\psi$, et, donc, pour $\Gamma$.
\end{rema}

La démonstration du Théorème \ref{theo:BankKaukmannGamma} donnée  dans \cite{BankKaufamannGamma2} est assez élémentaire.
On en donnera une preuve galoisienne plus loin. Le théorème de Hölder s'en déduit aisément en raisonnant
sur les pôles de $g(x)-g(x+1)$,
compte tenu du fait que les seules fonctions périodiques contenues dans $\C(x)$ sont les constates.
\par
Notons que, comme dans le cas des extensions algébriques, il est équivalent de démontrer que $\Gamma$ ne satisfait
à aucune équation différentielle algébrique à coefficients dans $\C(x)$ ou dans $\C$.
En effet, sans trop formaliser les définitions (qui sont très intuitives et pour lesquels
on peut se reporter à \cite{RittDifferentialAlgebra}, \cite{Kolchin:differentialalgebraandalgebraicgroups} ou à \cite[\S2]{MarkusZetaGamma}, pour un résumé rapide),
le corps $\C(x)$ est différentiellement algébrique sur $\C$, car $\frac{d}{dx}(x)\in\C$.
On peut aussi se limiter à démontrer la transcendance différentielle de $\Gamma$ sur la
\emph{clôture différentielle} de $\C(x)$.
Cette dernière est une extension différentielle de $(\C(x),\frac{d}{dx})$
contenant une solution de tout système d'équations différentielles
algébriques à coefficients dans
$(\C(x),\frac{d}{dx})$, qui a une solution dans une extension différentielle quelconque de
$(\C(x),\frac{d}{dx})$. C'est bien l'analogue différentiel de la clôture algébrique.
\par
Considérons la fonction $\zeta$ de Riemann, \ie
la fonction obtenue par prolongement analytique de
$$
\zeta(x)=\sum_{n=1}^\infty \frac{1}{n^x},
\hbox{~pour tout $x\in\C$, $\Re(x)>0$.}
$$
Elle satisfait à l'équation fonctionnelle
$$
\zeta(x)=2(2\pi)^{x-1}\Gamma(1-x)\sin\l(\frac{\pi x}{2}\r)\zeta(1-x).
$$
Comme le terme $(2\pi)^{x-1}\sin\l(\frac{\pi x}{2}\r)$ est différentiellement algébrique, le théorème de Hölder implique immédiatement
la transcendance différentielle de $\zeta$, car si $\zeta$ était différentiellement algébrique la fonction
$\Gamma(x)=\zeta(1-x)\frac{1}{2}(2\pi)^{1-x}\l(\sin\frac{\pi(1-x)}{2}\r)^{-1}\zeta(x)^{-1}$ devrait l'être aussi:

\begin{coro}
La fonction $\zeta$ de Riemann est différentiellement transcendante sur $\C(x)$.
\end{coro}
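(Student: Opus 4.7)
Je proc\'ederais par l'absurde, selon la strat\'egie esquiss\'ee par l'auteur dans le paragraphe qui pr\'ec\`ede l'\'enonc\'e. Supposons que $\zeta$ est diff\'erentiellement alg\'ebrique sur $\C(x)$; je vais en d\'eduire que $\Gamma$ l'est aussi, contredisant le th\'eor\`eme de H\"older.

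L'\'etape pr\'eparatoire consiste \`a rappeler que l'ensemble $\cA$ des fonctions de $\cM er(\C)$ qui sont diff\'erentiellement alg\'ebriques sur $\C(x)$ est un sous-corps diff\'erentiel de $\cM er(\C)$, contenant $\C(x)$, et stable par pr\'ecomposition avec les applications polynomiales de $\C[x]$. Ce dernier point se v\'erifie directement: si $f\in\cA$ v\'erifie $P(x,y,y^\p,\dots,y^{(n)})=0$, alors $f(1-x)$ v\'erifie $P(1-x,y,-y^\p,y^{\p\p},\dots,(-1)^n y^{(n)})=0$. Je constaterais \'egalement que les deux facteurs ``\'el\'ementaires'' de l'\'equation fonctionnelle appartiennent \`a $\cA$: $(2\pi)^{x-1}$ satisfait $y^\p=\log(2\pi)\,y$ et $\sin(\pi x/2)$ satisfait $y^{\p\p}+(\pi/2)^2 y=0$.

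L'argument principal s'obtient alors en isolant $\Gamma(1-x)$ dans l'\'equation fonctionnelle:
$$
\Gamma(1-x)=\frac{\zeta(x)}{2(2\pi)^{x-1}\sin(\pi x/2)\,\zeta(1-x)}.
$$
Le d\'enominateur n'\'etant pas identiquement nul, la structure de corps de $\cA$ et l'hypoth\`ese $\zeta\in\cA$ (qui entra\^ine aussi $\zeta(1-x)\in\cA$ par la stabilit\'e rappel\'ee ci-dessus) fournissent $\Gamma(1-x)\in\cA$, puis $\Gamma(x)\in\cA$ par une seconde pr\'ecomposition par $x\mapsto 1-x$, d'o\`u la contradiction cherch\'ee avec le th\'eor\`eme de H\"older.

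Le seul obstacle un tant soit peu s\'erieux est la v\'erification propre du fait que $\cA$ est un corps diff\'erentiel stable par les op\'erations invoqu\'ees, mais il s'agit d'un r\'esultat classique d'alg\`ebre diff\'erentielle. Aucune propri\'et\'e fine de $\zeta$ ni de $\Gamma$ n'intervient dans la preuve: seule compte la forme multiplicative de l'\'equation fonctionnelle, qui permet d'exprimer $\Gamma$ comme quotient de $\zeta$ et de fonctions diff\'erentiellement alg\'ebriques \'el\'ementaires.
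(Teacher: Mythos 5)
Votre preuve est correcte et suit essentiellement la m\^eme d\'emarche que celle du texte : on exprime $\Gamma$ (ou $\Gamma(1-x)$) comme quotient de $\zeta$, de $\zeta(1-x)$ et de facteurs \'el\'ementaires diff\'erentiellement alg\'ebriques \`a partir de l'\'equation fonctionnelle, puis on invoque le th\'eor\`eme de H\"older. Les v\'erifications suppl\'ementaires que vous explicitez (structure de corps diff\'erentiel de $\cA$, stabilit\'e par $x\mapsto 1-x$) sont exactement ce que le texte laisse implicite.
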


En 1920 Ostrowski \cite{OstrowskiZeta} prouve aussi la transcendance différentielle sur $(\C(x),\frac{d}{dz},\frac{d}{dx})$ de
la fonction obtenue par prolongement analytique de la série
$$
\zeta(z,x)=\sum_{n=1}^\infty \frac{z^n}{n^x},
$$
en répondant à une question posée par Hilbert. Les résultats sur la fonction zeta de Riemann ont été
généralisés aussi dans plusieurs directions, souvent à l'aide de la théorie de Nevanlinna \cite{Laine}. Par contre, la question de l'indépendance
différentielle de $\Gamma$ et $\zeta$, c'est-à-dire de la propriété de $\Gamma$ et de $\zeta$ de
ne pas être solutions d'une équation différentielle algébrique en deux fonctions inconnues à coefficients dans $\C(x)$, est ouverte.
Pour les résultats sur la fonction $\zeta$ de Riemann, on
renverra plutôt aux travaux de B.Q. Li et Z. Ye, qui fournissent un survol  de la littérature sur le sujet
(voir \cite{LiYeActa}, \cite{LiYeNevalinna}, \cite{LiYeSurvey}).
On peut démontrer alors le corollaire suivant (qui généralise et simplifie le Théorème 3 dans \cite{MarkusZetaGamma}):

\begin{coro}\label{coro:markuszetagamma}
Soient $\Psi$ et $\Omega$ deux fonctions méromorphes sur $\C$ qui vérifient respectivement les équations fonctionnelles
$$
\Psi(x+1)=\Psi(x)
\hbox{~et~}
\Omega(x+1)=x\Omega(x).
$$
Si $\Psi(x)$ est différentiellemet transcendante sur $\C$ (ou, de façon équivalente sur $\C(x)$),
$\Psi(x)$ et $\Omega(x)$ sont différentiellement indépendantes sur $\C$  (ou sur $\C(x)$).
\end{coro}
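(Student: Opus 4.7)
The plan is to argue by contradiction, reduce $\Omega$ to the Gamma function by writing $\Omega = P\Gamma$, and then apply Theorem \ref{theo:BankKaukmannGamma} in a well-chosen differential subfield of $\cM er(\C)$. Assume $\Psi$ and $\Omega$ satisfy a nontrivial differential-algebraic relation over $\C(x)$. Any two meromorphic solutions of $y(x+1)=xy(x)$ on $\C$ differ by a $1$-periodic meromorphic factor, so we write $\Omega = P\Gamma$ with $\tau(P)=P$. Set
$$\cF := \C(x)\langle \Psi, P\rangle_{\frac{d}{dx}} \subset \cM er(\C).$$
This field contains $\C(x)$, is stable under $\frac{d}{dx}$ by construction, and is stable under $\tau$ because $\tau$ preserves $\C(x)$ and fixes $\Psi$, $P$ and all their derivatives ($1$-periodicity is preserved by differentiation). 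Since $\Omega$ is differentially algebraic over $\cF$ and $P \in \cF$, the quotient $\Gamma = \Omega/P$ is also differentially algebraic over $\cF$.

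Theorem \ref{theo:BankKaukmannGamma} then produces $g \in \cF$ and $f_0,\dots,f_n \in \cF$, each satisfying $\tau(f_i) = f_i$ and not all zero, with
$$\sum_{i=0}^n f_i(x)\,\frac{d^i}{dx^i}\!\left(\frac{1}{x}\right) = g(x) - g(x+1). \qquad (\star)$$
The hypothesis that $\Psi$ is differentially transcendental over $\C$ ensures that $\Psi,\Psi',\Psi'',\dots$ are algebraically independent over $\C(x)$, so $\cF$ is obtained from $\C(x)$ by adjoining the free family $\{\Psi^{(k)}\}_k$ together with $\{P^{(k)}\}_k$. Writing any $\tau$-invariant element of $\cF$ as a rational expression in these generators and using that $\C(x)^\tau = \C$, one concludes $\cF^\tau = \C\langle\Psi,P\rangle_{\frac{d}{dx}}$, and consequently the coefficients $f_i$ in $(\star)$ lie in this subfield.

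Equation $(\star)$ is then an identity in $x$ whose coefficients are $1$-periodic meromorphic functions independent of $x$ in the sense just described. If $f_n\ne 0$, the left-hand side has a pole of order exactly $n+1$ at $x=0$ with leading Laurent coefficient a nonzero multiple of $f_n(0)$; matching with the pole structure of $g(x) - g(x+1)$ along $\Z$, in the manner of the classical deduction of Hölder's theorem from Theorem \ref{theo:BankKaukmannGamma}, forces $f_n\equiv 0$ and, iteratively, $f_{n-1}=\cdots=f_0=0$, contradicting nontriviality. The main obstacle is twofold: first, pinning down $\cF^\tau$ exactly, which amounts to checking that $\Psi$ and $P$ cannot conspire with $x$ to create unexpected $\tau$-invariants; second, carrying out the pole bookkeeping when the $f_i$ are no longer complex constants but $1$-periodic meromorphic functions whose own poles may sit at integer points and must be disentangled from the poles produced by $\frac{d^i}{dx^i}(1/x)$.
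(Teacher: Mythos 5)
Your overall strategy --- write $\Omega=P\Gamma$ with $P$ périodique, apply Theorem \ref{theo:BankKaukmannGamma} over $\cF=\cC(x)$ where $\cC=\C\langle\Psi,P\rangle_{\frac{d}{dx}}$, and derive a contradiction from the pole structure of $(\star)$ --- is exactly the paper's. But the two points you yourself flag as ``the main obstacle'' are the actual content of the proof, and the way you sketch the second one does not work. You claim that if $f_n\neq 0$ the left-hand side of $(\star)$ has a pole of order exactly $n+1$ at $x=0$ with leading Laurent coefficient a nonzero multiple of $f_n(0)$. This fails as soon as $f_n$ vanishes (or has a pole) at the integers, which genuinely happens: take $\Omega(x)=\sin^2(\pi x)\,\Gamma(x)$, so that $P=\sin^2(\pi x)\in\cC$ is a nonzero $1$-periodic element vanishing on all of $\Z$; then $P(x)/x^{n+1}$ has a pole of order only $n-1$ at $0$. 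The classical pole bookkeeping you invoke (``in the manner of the deduction of Hölder's theorem'') relies on the periodic elements of $\C(x)$ being constants, and that is precisely what is lost here; the iterative conclusion $f_n\equiv\cdots\equiv f_0\equiv 0$ is therefore not justified.

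The paper resolves both obstacles with a single algebraic observation: $x$ is transcendental over $\cC$, because a nonzero polynomial $P(T)\in\cC[T]$ vanishing at $x$ would, by applying $\tau^n$ (which fixes $\cC$), vanish at every element of the infinite set $x+\Z$ of the field $\cM er(\C)$. Hence $\cF=\cC(x)$ is literally a rational function field over $\cC$; this settles your first obstacle ($\cF^\tau=\cC$), and it lets one read $(\star)$ as a partial fraction identity over the abstract field $\cC$, where a term $f_i/x^{i+1}$ with $f_i\in\cC\smallsetminus\{0\}$ is an honest pole of order $i+1$ at $x=0$ regardless of what the meromorphic function $f_i$ does at the point $0\in\C$. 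The contradiction then comes not from killing the $f_i$ one by one but from the $g$ side: the left-hand side of $(\star)$ is a nonzero element of $\cC(x)$ whose only pole is at $0$, whereas $g(x)-g(x+1)$, if it has a pole at $0$, must have at least one further pole in $\Z\smallsetminus\{0\}$ (consider the extremal poles of $g$ within each $\Z$-orbit). You should replace your analytic Laurent matching by this algebraic argument; once that is done, the rest of your proposal matches the paper's proof.
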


\begin{proof}
Il existe une fonction $1$-périodique $\Pi(x)$ telle que
$\Omega(x)=\Pi(x)\Gamma(x)$.
Soit $\cC=\C\langle \Psi(x),\Pi(x)\rangle_{\frac{d}{dx}}\subset\cM er(\C)$ le corps
différentiel engendré par $\C$, $\Pi(x)$ et $\Psi(x)$ dans $\cM er(\C)$.
Le corps $\cF=\cC(x)$ vérifie les hypothèses du théorème précédent.
De plus, le sous-corps des éléments $1$-périodiques de $\cF$ coïncide avec $\cC$.
\par
Si on démontre que $\Omega$ est différentiellement transcendante sur $\cC(x)$ on pourra conclure que
$\Psi$ et $\Omega$ sont différentiellement indépendantes
sur $\C$. Si la fonction méromorphe $\Omega$ vérifiait une équation différentielle
algébrique à coefficients dans $\cF$, il en serait de même pour $\Gamma$ et, donc,
il existerait $g,f_0,f_1,\dots,f_n\in \cF$, avec $f_0,f_1,\dots,f_n$ périodiques de période $1$, telles que
$$
\sum_{i=0}^n f_i(x)\frac{d^i}{dx^i}\l(\frac{1}{x}\r)=
\frac{f_0(x)}{x}+\sum_{i=1}^n \frac{(-1)^{i}\,(i-1)!f_i(x)}{x^{i+1}}=g(x)-g(x+1).
$$
On remarque que $x$ est nécessairement transcendante sur $\cC$, car, si
le polynôme $P(T)\in\cC[T]$ s'annulait en $x$, il devrait s'annuler sur l'ensemble infini $x+\Z$.
On en déduit que la formule ci-dessus fournit une décomposition en éléments simples de
$g(x)-g(x+1)$ dans le corps des fonctions rationnelles $\cC(x)$.
Ceci est impossible car, si $g(x)-g(x+1)$ a un pôle en $x=0$, il doit aussi avoir au moins un autre pôle en
quelque $x\in\Z\smallsetminus\{0\}$.
\end{proof}

\begin{coro}
Les fonctions méromorphes $x\mapsto \zeta(\sin(2\pi x))$ et $\Gamma$
(resp. $x\mapsto \Gamma(\sin(2\pi x))$ et $\Gamma$) sont différentiellement indépendantes sur $\C(x)$.
\end{coro}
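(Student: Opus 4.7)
The plan is to invoke Corollary~\ref{coro:markuszetagamma} with $\Omega(x)=\Gamma(x)$ and $\Psi(x)$ equal to $\zeta(\sin(2\pi x))$ or $\Gamma(\sin(2\pi x))$. Both candidates for $\Psi$ are $1$-periodic because $\sin(2\pi x)$ is, and $\Omega$ already satisfies the required functional equation. The only nontrivial point is the differential transcendence of $\Psi$ over $\C$, which we will reduce to that of $\zeta$ (established in the preceding corollary) and of $\Gamma$ (H\"older's theorem).

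Writing $F$ for $\zeta$ or $\Gamma$, set $t=\sin(2\pi x)$ and $s=\cos(2\pi x)$, so that $s^{2}=1-t^{2}$, $dt/dx=2\pi s$, and $ds/dx=-2\pi t$. A direct induction via the chain rule yields
$$\Psi^{(k)}(x)=(2\pi s)^{k}F^{(k)}(t)+R_{k}\bigl(t,s,F(t),\dots,F^{(k-1)}(t)\bigr),$$
where $R_{k}\in\C[t,s,y_{0},\dots,y_{k-1}]$. Since $F^{(k)}(t)$ appears with nonzero coefficient in $\Psi^{(k)}$ and not in any $\Psi^{(j)}$ with $j<k$, the substitution $z_{k}\mapsto\Psi^{(k)}$ induces an injective ring homomorphism
$$\C[z_{0},\dots,z_{n}]\hookrightarrow \C[t,s,y_{0},\dots,y_{n}]/(s^{2}-1+t^{2}).$$
A hypothetical nonzero differential polynomial $P$ with $P(\Psi,\dots,\Psi^{(n)})=0$ thus produces a nonzero polynomial $G$ in the target ring satisfying $G(t,s,F(t),\dots,F^{(n)}(t))=0$ identically in $x$.

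Decompose $G=A+sB$ with $A,B\in\C[t,y_{0},\dots,y_{n}]$; at least one of $A,B$ is nonzero. The involution $x\mapsto 1/2-x$ fixes $t$ and reverses the sign of $s$; evaluating the identity $G=0$ at $x$ and at $1/2-x$, then adding and subtracting, yields $A(t,F(t),\dots,F^{(n)}(t))=0$ and $B(t,F(t),\dots,F^{(n)}(t))=0$ identically in $x$. Because $\sin(2\pi x)\colon \C\to\C$ is surjective, the nonzero one among $A,B$ provides a nontrivial differential-algebraic equation satisfied by $F$ over $\C$, contradicting its differential transcendence.

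The main obstacle is ensuring that the substitution does not collapse a nontrivial relation among the $\Psi^{(k)}$ into a trivial one among the $F^{(k)}(t)$; this is precisely what the leading-term formula above rules out by exhibiting $F^{(k)}(t)$ as a genuinely new variable appearing only at step $k$. The remainder of the argument is routine bookkeeping around the chain rule and the symmetry $x\mapsto 1/2-x$.
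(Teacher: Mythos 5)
Your proof is correct and follows essentially the same route as the paper: reduce, via Corollaire~\ref{coro:markuszetagamma}, to the differential transcendence of $\zeta(\sin(2\pi x))$ (resp.\ $\Gamma(\sin(2\pi x))$) over $\C$, and obtain that by substituting $t=\sin(2\pi x)$ into the known differential transcendence of $\zeta$ (resp.\ $\Gamma$), using the triangular structure of the chain rule. The only divergence is in the bookkeeping: where the paper absorbs $\cos(2\pi x)$ into the base field and invokes the fact that $\C(\sin(2\pi x),\cos(2\pi x))$ is a differentially algebraic extension of $\C$, you eliminate it explicitly through the decomposition $G=A+sB$, the symmetry $x\mapsto \tfrac12-x$ and the surjectivity of $\sin$ on $\C$ --- a more self-contained but equivalent argument.
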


\begin{proof}
On démontre seulement les cas de $\zeta(\sin(2\pi x))$ et $\Gamma$.
Pour pouvoir appliquer le Corollaire \ref{coro:markuszetagamma}, il suffit de démontrer que  $\zeta(\sin(2\pi x))$ est différentiellement transcendante.
On sait que $\zeta$ est différentiellement transcendante sur $\C(x)$, c'est-à-dire que
la famille de fonctions $\l\{\frac{d^i\zeta}{dx^i}(x)\r\}_{i\geq 0}$ est algébriquement indépendante sur $\C(x)$.
Il s'ensuit que la famille $\l\{\frac{d^i\zeta}{dx^i}(\sin(2\pi x))\r\}_{i\geq 0}$ est algébriquement indépendante sur $\C(\sin(2\pi x))$ et donc sur son extension algébrique $\C(\sin(2\pi x),\cos(2\pi x))$.
Donc $\zeta(\sin(2\pi x))$ est différentiellement transcendante sur $\C(\sin(2\pi x),\cos(2\pi x))$ et
donc sur $\C$, car $\C(\sin(2\pi x),\cos(2\pi x))$ est une extension différentiellement algébrique de $\C$.
\end{proof}

%%%%%%%%%%%%%%%%%%%%%%%%%%%%%%%%%%%%%%%%%%%%%%%%%%%%%%%%%%%%%%%%%%%
%%%%%%%%%%%%%%%%%%%%%%%%%%%%%%%%%%%%%%%%%%%%%%%%%%%%%%%%%%%%%%%%%%%
\section{Théorie de Galois paramétrée}
%%%%%%%%%%%%%%%%%%%%%%%%%%%%%%%%%%%%%%%%%%%%%%%%%%%%%%%%%%%%%%%%%%%
%%%%%%%%%%%%%%%%%%%%%%%%%%%%%%%%%%%%%%%%%%%%%%%%%%%%%%%%%%%%%%%%%%%

La théorie de Galois paramétrée des équations différentielles et aux différences
est étudiée dans \cite{cassisinger} et \cite{HardouinSinger}.
Le cadre plus général est celui décrit dans ce dernier papier. Les auteurs considèrent
un corps $F$ équipé de deux familles finies de dérivations, $\Delta$ et $\Pi$, et d'une famille finie
d'automorphismes $\Sg$ et ils supposent que les éléments de $\Delta\cup\Pi\cup\Sg$ commutent deux à deux,
en tant qu'opérateurs agissant sur $F$.
Ils se donnent un système \emph{intégrable} d'équations matricielles\footnote{Le fait que le système
est intégrable signifie que les matrices $A_\sg$ et $B_\partial$
satisfont à des équations fonctionnelles liées à la commutativité des opérateurs; \cf Proposition
\ref{prop:integrabilite}.}
\beq\label{eq:syshs}
\l\{
\begin{array}{l}
\sg Y=A_\sg Y\hbox{~pour tout $\sg\in\Sg$}\\
\partial Y=B_\partial Y\hbox{~pour tout $\partial\in\De$}
\end{array}
\r.
\eeq
avec $A_\sg$ et $B_\partial$ matrice carrées à coefficients dans $F$, et $A_\sg$ inversible pour
tout $\sg\in\Sg$.
Moralement, il faut considérer $\Pi$ comme l'ensemble des dérivations associées à des paramètres du système.
À partir de cela,
ils construisent un groupe qui donne des informations sur les relations différentielles vérifiées par les solutions de \eqref{eq:syshs}
par rapport aux paramètres.
Dans le but de simplifier les notations de l'exposition qui suit, sans que cela simplifie vraiment les preuves,
on se placera dans un cadre moins général.

%%%%%%%%%%%%%%%%%%%%%%%%%%%%%%%%%%%%%%%%%%%%%%%%%%%%%%%%%%%
\subsection{Théorie de Picard-Vessiot paramétrée}
\label{subsec:DPV}
%%%%%%%%%%%%%%%%%%%%%%%%%%%%%%%%%%%%%%%%%%%%%%%%%%%%%%%%%%%

Considérons un corps différentiel aux différences, \ie un triplet
$(F,\sg,\partial)$, où $F$ est un corps, $\sg$ un automorphisme de $F$ et $\partial$ une dérivation de $F$, telle que $\partial\sg=\sg\partial$.
On suppose que $\sg$ n'est pas un automorphisme cyclique, bien que cette hypothèse ne soit nécessaire qu'à quelques endroits.
On dira que $F$ est un $(\sg,\partial)$-corps (et on utilisera sans les définir les concepts, très intuitifs, de
$(\sg,\partial)$-anneau, $(\sg,\partial)$-algèbre, ...; voir \cite{Levin:difference} et \cite{Cohn:difference} pour une exposition systématique de la théorie).
\par
La donnée initiale est celle d'un système aux différences
\beq\label{eq:sys}
\sg(Y)=AY,
\eeq
où $A\in GL_\nu(F)$ est une matrice inversible à coefficients dans $F$.

\begin{exem}
Typiquement on peut considérer le corps $\C(x)$ des fonctions rationnelles à
coefficients complexes avec les opérateurs suivants:
\begin{itemize}
\item $\tau: f(x)\mapsto f(x+1)$ et $\partial=\frac{d}{dx}$;
\item $\sgq: f(x)\mapsto f(qx)$, pour un $q\in\C$, $q\neq 0$ fixé, et $\partial=x\frac{d}{dx}$.
\end{itemize}
\end{exem}

\begin{defi}[Définition 6.10 dans \cite{HardouinSinger}]\label{defn:PV}
On appelle \emph{$(\sg,\partial)$-extension de Picard-Vessiot pour \eqref{eq:sys}}
un $(\sg,\partial)$-anneau $\cR$, extension de $F$, muni d'une extension de $\sg$ et $\partial$, préservant
la commutativité, \ie $[\sg,\partial]=0$, tel que:
\begin{enumerate}
\item
$\cR$ est un $(\sg,\partial)$-anneau simple,
\ie il n'a pas d'idéaux propres invariants par $\sg$ et $\partial$;

\item
$\cR$ est engendré, en tant que $\partial$-anneau, par une matrice inversible $Z \in GL_\nu(\cR)$ et $\frac{1}{det(Z)}$,
avec $Z$ solution de \eqref{eq:sys}.
\end{enumerate}
\end{defi}

Il est possible de construire formellement un tel objet.
Considérons l'anneau de $\partial$-polynômes
$$
F\{X,\det X^{-1}\}_{\partial}:=F\l[X_{i,j}^{(k)};\,i,j=1,\dots,n;\, k\geq 1\r]\l[\frac{1}{\det(X^{(1)}_{i,j})}\r],
$$
où $X_{i,j}^{(k)}$ sont des variables algébriquement indépendantes, telles que $\partial(X_{i,j}^{(k)})=X_{i,j}^{(k+1)}$.
Soient $X=(X_{i,j}^{(1)})$ et $X^{(k)}=\partial^k X$.
On définit sur $F\{X,\det X^{-1}\}_{\partial}$ une structure de $(\sg,\partial)$-algèbre, en posant
$\sg(X)=A X$ et
\beq\label{eq:sigmastructure}
\begin{array}{rcl}
\sg( X^{(k)})
&=&\sg(\partial^kX)=\partial^k(\sg(X))=\partial^k(AX)\\
&=&\ds\sum_{h=0}^k{k\choose h}\partial^{h}(A) X^{(k-h)},\hbox{~pour tout $k\geq 1$.}
\end{array}
\eeq
Le quotient $\cR$ de $F\{X,\det X^{-1}\}_{\partial}$ par un idéal invariant par $\sg$ et $\partial$ et maximal par cette propriété
(donc par un \emph{$(\sg,\partial)$-idéal maximal}) est bien sûr une $(\sg,\partial)$-extension de Picard-Vessiot pour \eqref{eq:sys}.
\par
Soit $K=F^\sg$ le sous-corps de $F$ des éléments invariants par $\sg$. La commutativité de $\sg$ et $\partial$
implique que $K$ est un corps différentiel par rapport à $\partial$.

\begin{prop}[Propositions 6.14 et 6.16 dans \cite{HardouinSinger}]
Si $(K,\partial)$ est différentiellement clos alors:
\begin{enumerate}
\item
Le sous-anneau des constantes $\cR^\sg$
d'une $(\sg,\partial)$-extension de Picard-Vessiot $\cR$ pour \eqref{eq:sys}
coïncide avec $K$, c'est-à-dire que $\cR$ ne contient pas de nouvelles constantes par rapport à $F$.

\item
Deux $(\sg,\partial)$-extensions de Picard-Vessiot pour \eqref{eq:sys} sont isomorphes en tant que $(\sg,\partial)$-anneaux.
\end{enumerate}
\end{prop}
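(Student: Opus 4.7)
L'id\'ee directrice est d'adapter le sch\'ema classique de la th\'eorie de Picard-Vessiot, la cl\^oture diff\'erentielle de $(K,\partial)$ jouant le r\^ole que tient la cl\^oture alg\'ebrique des constantes dans le cadre non param\'etr\'e.

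\emph{Point 1.} La commutation $\sg\partial=\partial\sg$ assure que $\cR^\sg$ est un $\partial$-sous-anneau de $\cR$ contenant $K$. La $(\sg,\partial)$-simplicit\'e entra\^ine que $\cR$ est r\'eduit, puis, via ses idempotents primitifs, se d\'ecompose en un produit direct fini de $\partial$-domaines permut\'es cycliquement par $\sg$, ce qui force $\cR^\sg$ \`a \^etre un $\partial$-corps. On raisonne alors par l'absurde en supposant l'existence de $c\in\cR^\sg\smallsetminus K$ et on consid\`ere l'id\'eal $\kI\subset K\{y\}_\partial$ des polyn\^omes $\partial$-diff\'erentiels annulant $c$. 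Comme $c\notin K$, le lieu des z\'eros de $\kI$ ne contient aucun $k\in K$, mais la cl\^oture diff\'erentielle de $K$ (Nullstellensatz diff\'erentiel) fournit n\'eanmoins une sp\'ecialisation non triviale de $c$ vers un \'el\'ement de $K$, qui permet de construire un $(\sg,\partial)$-$F$-morphisme $\cR\to\cR'$ dont le noyau est un $(\sg,\partial)$-id\'eal propre non nul de $\cR$, contredisant la simplicit\'e.

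\emph{Point 2.} Soient $\cR_1,\cR_2$ deux extensions de Picard-Vessiot pour \eqref{eq:sys}, engendr\'ees par les matrices fondamentales $Z_1,Z_2$. On munit $\cT:=\cR_1\otimes_F\cR_2$ de sa structure naturelle de $(\sg,\partial)$-$F$-alg\`ebre et on introduit $C:=Z_1^{-1}Z_2\in GL_\nu(\cT)$, qui v\'erifie $\sg(C)=C$. Soit $\kJ$ un $(\sg,\partial)$-id\'eal maximal de $\cT$ et $\cS:=\cT/\kJ$. Les morphismes canoniques $\cR_i\to\cS$ sont injectifs, leur noyau \'etant un $(\sg,\partial)$-id\'eal propre du $(\sg,\partial)$-anneau simple $\cR_i$. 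Une analyse des $\sg$-invariants du produit tensoriel montre que $\cT^\sg$ co\"incide avec $K\{C,\det C^{-1}\}_\partial$ et que le passage au quotient par un $(\sg,\partial)$-id\'eal maximal revient \`a quotienter $\cT^\sg$ par un $\partial$-id\'eal maximal~; la cl\^oture diff\'erentielle de $K$ donne alors $\cS^\sg=K$, et l'image de $C$ dans $\cS$ est \`a coefficients dans $K$. Il en r\'esulte que $\cR_1$ et $\cR_2$ ont la m\^eme image dans $\cS$, d'o\`u l'isomorphisme cherch\'e.

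La principale difficult\'e est la derni\`ere \'etape du point 1 : traduire $c\notin K$ en un $(\sg,\partial)$-id\'eal propre non nul de $\cR$ demande d'exploiter pleinement la cl\^oture diff\'erentielle de $K$, via une version diff\'erentielle ad hoc du Nullstellensatz. C'est la nouveaut\'e essentielle par rapport \`a la th\'eorie non param\'etr\'ee, o\`u la simple cl\^oture alg\'ebrique des constantes suffit ; le point 2 en d\'ecoule alors de mani\`ere formelle, modulo l'identification des $\sg$-invariants du produit tensoriel mentionn\'ee ci-dessus.
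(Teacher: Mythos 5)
Le texte ne d\'emontre pas cette proposition~: il se contente de renvoyer aux Propositions 6.14 et 6.16 de \cite{HardouinSinger}. Votre plan suit la m\^eme strat\'egie g\'en\'erale que cette r\'ef\'erence (adapter le sch\'ema de van der Put--Singer en faisant jouer \`a la cl\^oture diff\'erentielle de $K$ le r\^ole de la cl\^oture alg\'ebrique des constantes), et le point~2 est correctement articul\'e~: une fois le point~1 acquis pour le quotient $\cS$ de $\cR_1\otimes_F\cR_2$ par un $(\sg,\partial)$-id\'eal maximal, la matrice $C=Z_1^{-1}Z_2$, dont les coefficients sont des $\sg$-constantes, tombe dans $GL_\nu(K)$ et les images de $\cR_1$ et $\cR_2$ dans $\cS$ co\"{\i}ncident. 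Il est d'ailleurs inutile (et d\'elicat) d'identifier $\cT^\sg$ \`a $K\{C,\det C^{-1}\}_\partial$ comme vous le proposez~: il suffit d'appliquer le point~1 \`a $\cS$, qui est un $(\sg,\partial)$-anneau simple finiment $\partial$-engendr\'e sur $F$ et contenant une matrice fondamentale de solutions.

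En revanche, l'\'etape centrale du point~1 --- celle que vous d\'esignez vous-m\^eme comme la difficult\'e principale --- n'est pas \'etablie, et sa formulation est incoh\'erente. Vous affirmez que le lieu des z\'eros de $\kI$ ne contient aucun point de $K$ parce que $c\notin K$, puis que la cl\^oture diff\'erentielle en fournit un~: ces deux phrases se contredisent, et la premi\`ere est fausse ($c\notin K$ n'emp\^eche nullement $V(\kI)$ d'avoir des points $K$-rationnels~; le Nullstellensatz diff\'erentiel en garantit au contraire l'existence d\`es que $\kI$ est propre, ce qui est le cas puisque $1\notin\kI$). Surtout, une fois choisi un point $a\in K$ de $V(\kI)$, tout le contenu de l'\'enonc\'e r\'eside dans la \emph{propret\'e} du $(\sg,\partial)$-id\'eal de $\cR$ engendr\'e par $c-a$ et ses $\partial$-d\'eriv\'ees~: le morphisme $\cR\to\cR'$ que vous invoquez n'existe que si cette propret\'e est d\'ej\`a acquise, et rien dans votre argument ne l'assure. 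L'astuce classique --- choisir un polyn\^ome $P$ \`a coefficients constants, de degr\'e minimal, tel que $P(c)$ soit non inversible, de sorte que l'id\'eal engendr\'e soit automatiquement un id\'eal diff\'erentiel propre --- ne se transpose pas telle quelle au cadre des $\partial$-polyn\^omes, car la d\'erivation totale ne fait pas baisser l'ordre. La r\'ef\'erence contourne ce point en montrant que $\cR^\sg$ est un $\partial$-corps dont tout \'el\'ement est contraint sur $K$ au sens de Kolchin, puis en invoquant le fait qu'un corps diff\'erentiellement clos n'admet pas d'extension contrainte propre~; c'est cette \'etape qu'il vous faut fournir pour que le point~1, et donc le point~2 qui en d\'epend, soit complet.
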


\begin{rema}\label{rema:descente}
Si  $K$ est seulement algébriquement clos et la $(\sg,\partial)$-extension de Picard-Vessiot $\cR$ est en plus un $\sg$-anneau simple,
alors le point 1 de la proposition ci-dessus est encore vrai.
Par contre il faut en général procéder à une extension des constantes pour
avoir un isomorphisme entre deux $(\sg,\partial)$-extensions de Picard-Vessiot.
M. Wibmer a affiné la construction donnée ci-dessus pour obtenir
une $(\sg,\partial)$-extension de Picard-Vessiot qui est aussi un $\sg$-anneau simple,
\cf \cite{Wibmchev} et \cite{wibmer2011existence} (son argument est aussi repris dans \cite{diviziohardouinPacific}). Pour cela il construit de façon fine
un $(\sg,\partial)$-idéal maximal de $F\{X,\det X^{-1}\}_{\partial}$, qui est aussi un $\sg$-idéal maximal, en partant d'un $\sg$-idéal maximal de $F[X,\det X^{-1}]$
qu'il prolonge en le dérivant.
Ces questions de descente sont traitées en toute généralité, par des méthodes tannakiennes,
dans \cite{GilletGorchinskyOvchinnikov}.
\end{rema}

%%%%%%%%%%%%%%%%%%%%%%%%%%%%%%%%%%%%%%%%%%%%%%%%%%%%%%%%%%%
\subsection{Groupe de Picard Vessiot paramétré}
\label{subsec:difPVgr}
%%%%%%%%%%%%%%%%%%%%%%%%%%%%%%%%%%%%%%%%%%%%%%%%%%%%%%%%%%%
Supposons, pour simplifier, que le corps des $\sg$-constantes $(K,\partial)$ est différentiellement clos.

\medskip
Soit $\cR$ une $(\sg,\partial)$-extension de Picard-Vessiot
pour \eqref{eq:sys}.
Comme dans la théorie de Galois des équations aux différences non paramétrées (voir \cite{vdPutSingerDifference}),
$\cR$ n'est pas, en général, un anneau intègre, mais il est la somme directe de copies d'un anneau intègre, de façon
qu'on peut considérer son corps total des fractions $L$, qui est isomorphe à une somme directe de copies d'un même corps
(\cf \cite{HardouinSinger}).

\begin{defi}\label{defn:pvgr}
Le groupe $Gal^{\partial} (A)$ (qu'on note aussi $Aut^{\sg,\partial}(L / F)$)
des automorphismes de  $L$, qui fixent $F$ et commutent avec $\sg$ et $\partial$, est le
\emph{groupe de Galois paramétré} de \eqref{eq:sys}. On l'appellera aussi \emph{$\partial$-groupe de Galois} de \eqref{eq:sys}.
\end{defi}

\begin{rema}
Le groupe $Gal^{\partial}(A)$ agit sur une matrice fondamentale $Z\in GL_\nu(L)$ de solutions
de \eqref{eq:sys}.
Pour tout $\varphi\in Gal^{\partial}(A)$, la matrice $\varphi(Z)$ est encore une solution de \eqref{eq:sys}, donc il existe
$U\in GL_\nu(K)$ telle que $\varphi(Z)=ZU$, avec $\sg(ZU)=\sg(Z)U=AZU$.
Cette action fournit une représentation fidèle de $Gal^{\partial}(A)$ dans $GL_\nu(K)$, dont
l'image est formée des $K$-points d'un $\partial$-groupe algébrique linéaire de
$GL_\nu(K)$, dans le sens de Kolchin.
C'est-à-dire que c'est un sous-groupe de
$GL_\nu(K)$ définit par un $\partial$-idéal de $K\l\{X,\det X^{-1}\r\}_{\partial}$, donc un lieu de zéros d'un ensemble fini d'équations différentielles
à coefficients dans $K$.
Comme $(K,\partial)$ est un corps différentiellement clos, nous pouvons nous contenter ici d'une description naïve de ce groupe, via son ensemble de points $K$-rationnels.
On aura tendance à ne pas faire très attention à distinguer les groupes de Galois et leur représentations en tant que sous-groupes de $GL_\nu$.
\end{rema}

On reconnaîtra dans la proposition ci-dessous le c{\oe}ur
de la correspondance de Galois, qu'on n'énoncera pas en entier.
On n'aura pas de difficulté à en imaginer les énoncés
en s'inspirant de la théorie de Galois classique.

\begin{prop}[Lemme 6.19 dans \cite{HardouinSinger}]~
\begin{enumerate}
\item L'anneau $L^{Gal^{\partial} (A)}$ des éléments de $L$ fixés par $Gal^{\partial} (A)$ coïncide avec $F$.
\item Soit $H$ un $\partial$-sous-groupe algébrique de $Gal^{\partial}(A)$.
Si $L^H = F$, alors $H = Gal^{\partial} (A)$.
\end{enumerate}
\end{prop}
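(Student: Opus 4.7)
Le plan consiste \`a reproduire la correspondance de Galois classique des extensions de Picard--Vessiot, transport\'ee dans la g\'eom\'etrie alg\'ebrique $\partial$-diff\'erentielle de Kolchin, l'hypoth\`ese de cl\^oture $\partial$-diff\'erentielle de $K$ jouant le r\^ole d\'ecisif.

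Pour (1), on exploite la structure de torseur d'une $(\sg,\partial)$-extension de Picard--Vessiot~: l'application naturelle
$$
\mu : \cR \otimes_F \cR \longrightarrow \cR \otimes_K K\{Gal^{\partial}(A)\},
$$
qui envoie $Z\otimes 1$ sur $Z\otimes 1$ et $1\otimes Z$ sur $(Z\otimes 1)(1\otimes U)$, o\`u $U$ d\'esigne la matrice universelle codant l'action galoisienne, est un isomorphisme de $(\sg,\partial)$-$\cR$-alg\`ebres (c'est la version param\'etr\'ee du th\'eor\`eme classique de torseur, qui r\'esulte de la simplicit\'e de $\cR$ et de l'\'egalit\'e $\cR^{\sg}=K$ pos\'ee plus haut). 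Soit alors $a\in L^{Gal^{\partial}(A)}$~; on \'ecrit $a=p/q$ avec $p,q\in\cR$ et on pose $\theta := p\otimes q - q\otimes p$ dans $\cR\otimes_F\cR$. \'Evaluer $\mu(\theta)$ en un point $K$-rationnel $\varphi\in Gal^{\partial}(A)(K)$ produit l'\'el\'ement $\varphi(q)p-\varphi(p)q$, qui s'annule pr\'ecis\'ement parce que $\varphi(a)=a$. Comme $(K,\partial)$ est $\partial$-diff\'erentiellement clos, le lieu Kolchin-ferm\'e de $Gal^{\partial}(A)$ d\'efini par l'annulation de $\mu(\theta)$ est le groupe tout entier d\`es qu'il contient l'ensemble de ses points $K$-rationnels. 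Par cons\'equent $\mu(\theta)=0$, et l'injectivit\'e de $\mu$ donne $p\otimes q=q\otimes p$ dans $\cR\otimes_F\cR$~; la platitude fid\`ele de $\cR$ sur $F$ entra\^{\i}ne enfin $a\in F$.

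Pour (2), on proc\`ede dualement. Soit $H$ un $\partial$-sous-groupe Kolchin-ferm\'e strictement contenu dans $Gal^{\partial}(A)$. Via $\mu$, l'alg\`ebre de Hopf $\partial$-diff\'erentielle $K\{Gal^{\partial}(A)\}$ se lit dans $\cR\otimes_F\cR$~; les invariants $K\{Gal^{\partial}(A)\}^H$ contiennent strictement $K$ puisque $H$ est propre (c'est l'analogue, dans la th\'eorie des groupes alg\'ebriques diff\'erentiels lin\'eaires \`a la Kolchin, du fait classique qu'un sous-groupe alg\'ebrique propre admet un invariant r\'egulier non constant~; on peut aussi l'obtenir via une variante diff\'erentielle du th\'eor\`eme de Chevalley sur la stabilisation d'une droite dans une repr\'esentation). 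En tirant en arri\`ere par $\mu^{-1}$ un tel invariant non trivial, on obtient un \'el\'ement $b\in\cR$ fix\'e par $H$ mais n'appartenant pas \`a $F$ (gr\^ace \`a (1) appliqu\'e \`a $Gal^{\partial}(A)$)~; sa classe dans $L$ fournit un \'el\'ement de $L^H\setminus F$, contredisant l'hypoth\`ese $L^H=F$.

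La difficult\'e principale r\'eside dans le passage de l'invariance ponctuelle \`a l'invariance sch\'ematique, \ie dans l'implication ``$\mu(\theta)$ s'annule en tout point $K$-rationnel de $Gal^{\partial}(A)$'' $\Rightarrow$ ``$\mu(\theta)=0$ dans $\cR\otimes_K K\{Gal^{\partial}(A)\}$''. Ceci repose sur un Nullstellensatz $\partial$-diff\'erentiel pour les ferm\'es de Kolchin d'un groupe $\partial$-alg\'ebrique lin\'eaire sur un corps $\partial$-clos, et justifie pr\'ecis\'ement l'hypoth\`ese simplificatrice pos\'ee au d\'ebut du \S\ref{subsec:difPVgr}~; la Remarque~\ref{rema:descente} indique dans quelle mesure cette hypoth\`ese peut \^etre affaiblie au prix d'un argument de descente des constantes plus d\'elicat.
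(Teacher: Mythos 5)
Le texte ne d\'emontre pas cette proposition~: elle y est simplement cit\'ee comme le Lemme~6.19 de Hardouin--Singer. Votre tentative doit donc \^etre jug\'ee sur ses propres m\'erites et contre la preuve de la r\'ef\'erence. Pour le point (1), votre argument est essentiellement le bon et correspond \`a la d\'emarche standard~: isomorphisme de torseur $\cR\otimes_F\cR\cong\cR\otimes_K K\{Gal^{\partial}(A)\}$, annulation de $\mu(\theta)$ en tout point $K$-rationnel, densit\'e de Kolchin de ces points (c'est l\`a qu'intervient l'hypoth\`ese que $(K,\partial)$ est diff\'erentiellement clos, via la r\'eduction de l'alg\`ebre de coordonn\'ees $\partial$-diff\'erentielle), puis conclusion. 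Deux d\'etails \`a r\'ediger~: il faut prendre $q$ non-diviseur de z\'ero dans $\cR$ (rappelons que $L$ est un anneau total de fractions, pas un corps), et l'\'etape finale n'est pas vraiment de la platitude fid\`ele mais un argument d'ind\'ependance lin\'eaire sur le corps $F$ ($p\otimes q=q\otimes p$ force $p$ et $q$ \`a \^etre $F$-li\'es, d'o\`u $p/q\in F$).

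Le point (2) contient en revanche une lacune r\'eelle. Le ``fait classique'' que vous invoquez --- un sous-groupe alg\'ebrique ferm\'e propre $H$ de $G$ admettrait toujours un invariant \emph{r\'egulier} non constant dans $K\{G\}$ --- est faux d\'ej\`a dans le cadre alg\'ebrique~: pour $G=SL_2$ et $H=B$ un sous-groupe de Borel, $G/B\cong{\mathbb P}^1$ et $K[G]^{B}=K$. Votre strat\'egie ``tirer en arri\`ere un invariant non trivial'' s'effondre donc d\`es ce stade (accessoirement, l'image r\'eciproque par $\mu$ d'un \'el\'ement de $1\otimes K\{Gal^{\partial}(A)\}^H$ vit dans $\cR\otimes_F\cR$ et non dans $\cR$~; il faudrait l'identit\'e de changement de base plat $(L\otimes_F\cR)^H=L\otimes_F\cR^H$ pour redescendre). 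La preuve de Hardouin--Singer (suivant van der Put--Singer et Cassidy) proc\`ede autrement et utilise l'hypoth\`ese $L^H=F$ de fa\c{c}on essentielle~: par le th\'eor\`eme de Chevalley dans sa version $\partial$-alg\'ebrique, $H$ est le stabilisateur d'une droite $Kw$ dans une construction tensorielle $W$ de l'espace des solutions~; en \'ecrivant $w=\sum_i f_ie_i$ dans une base du module aux diff\'erences correspondant, la semi-invariance de $w$ sous $H$ donne $f_i/f_j\in L^H=F$, ce qui permet de descendre la droite \`a $F$ et d'en d\'eduire que $Gal^{\partial}(A)$ tout entier stabilise $Kw$, donc $H=Gal^{\partial}(A)$. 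On pourrait sauver une variante de votre id\'ee en rempla\c{c}ant les invariants r\'eguliers par des invariants \emph{rationnels} (non triviaux pour $H$ propre, \`a la Rosenlicht), mais il faudrait alors contr\^oler les localisations de part et d'autre de $\mu$ et justifier l'analogue $\partial$-diff\'erentiel de ce r\'esultat, ce que votre texte ne fait pas.
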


Le groupe de Galois (non paramétré) $Gal(A)$ de
\eqref{eq:sys} sur K est construit de la façon suivante: on considère le quotient de l'algèbre de polynômes
$F[X,\det X^{-1}]$, munie de l'action de $\sg$ définie par $\sg(X)=A(X)$, par un $\sg$-idéal maximal, et son corps total des franctions $L$; alors
$Gal(A)$ est le groupe d'automorphismes de $L/F$ qui commutent avec $\sg$
(voir \cite{vdPutSingerDifference}). Nous avons:

\begin{prop}[Proposition 6.21 in \cite{HardouinSinger}]
Le groupe algébrique $Gal(A)$ est la clôture de Zariski de $Gal^{\partial} (A)$ (dans $GL_\nu(K)$).
\end{prop}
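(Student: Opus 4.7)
Le plan est de r\'ealiser les deux extensions de Picard--Vessiot sur une m\^eme matrice fondamentale de solutions, d'en d\'eduire imm\'ediatement l'inclusion $Gal^{\partial}(A)\subset Gal(A)$ dans $GL_\nu(K)$, puis d'obtenir l'inclusion inverse au niveau de la cl\^oture de Zariski \`a l'aide des deux correspondances de Galois. Suivant l'affinement de Wibmer rappel\'e dans la Remarque \ref{rema:descente}, je choisirais d'embl\'ee $\cR$ \`a la fois $(\sg,\partial)$-anneau simple et $\sg$-anneau simple, ce qui se fait en partant d'un $\sg$-id\'eal maximal $\mathfrak{n}$ de $F[X,\det X^{-1}]$ et en le prolongeant en un $(\sg,\partial)$-id\'eal maximal $\mathfrak{m}$ de $F\{X,\det X^{-1}\}_\partial$ tel que $\mathfrak{m}\cap F[X,\det X^{-1}]=\mathfrak{n}$. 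Fixant alors $Z\in GL_\nu(\cR)$ fondamentale, la sous-$F$-alg\`ebre $R:=F[Z,\det Z^{-1}]$ s'identifie \`a $F[X,\det X^{-1}]/\mathfrak{n}$: elle est $\sg$-stable (car $\sg(Z)=AZ$) et $\sg$-simple par construction; c'est donc un anneau de Picard--Vessiot non param\'etr\'e pour \eqref{eq:sys}, et son anneau total des fractions $\wtilde L$ se plonge dans $L$.

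L'inclusion facile $Gal^{\partial}(A)\subset Gal(A)$ suit alors de ce que, pour tout $\varphi\in Gal^{\partial}(A)$, on a $\varphi(Z)=ZU_\varphi$ avec $U_\varphi\in GL_\nu(K)\subset GL_\nu(F)$; donc $\varphi$ pr\'eserve $R$ et $\wtilde L$, et sa restriction y est un $\sg$-automorphisme de $\wtilde L/F$ (\ie un \'el\'ement de $Gal(A)$), repr\'esent\'e dans $GL_\nu(K)$ par la m\^eme matrice $U_\varphi$. En passant \`a la cl\^oture de Zariski, on obtient $H:=\ol{Gal^{\partial}(A)}\subset Gal(A)$, puisque $Gal(A)$ est Zariski-ferm\'e.

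Pour l'inclusion r\'eciproque, j'appliquerais la correspondance de Galois non param\'etr\'ee de \cite{vdPutSingerDifference} au sous-groupe alg\'ebrique $H\subset Gal(A)$: il suffit de v\'erifier $\wtilde L^H=F$. L'action de $Gal(A)\subset GL_\nu(K)$ sur $R$ \'etant polynomiale en $U$, l'\'egalit\'e $U\cdot f=f$ pour $f\in\wtilde L$ se traduit par une condition polynomiale sur $U$; le stabilisateur dans $GL_\nu(K)$ de tout \'el\'ement de $\wtilde L$ est donc Zariski-ferm\'e. Par cons\'equent, un \'el\'ement de $\wtilde L$ est fix\'e par $Gal^{\partial}(A)$ si et seulement s'il l'est par sa cl\^oture de Zariski $H$, soit $\wtilde L^H=\wtilde L^{Gal^{\partial}(A)}$. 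Mais $\wtilde L\subset L$ et la correspondance de Galois param\'etr\'ee (Lemme 6.19 de \cite{HardouinSinger}) donne $L^{Gal^{\partial}(A)}=F$, d'o\`u $\wtilde L^H=F$ et, finalement, $H=Gal(A)$.

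L'obstacle principal est la mise en place simultan\'ee des deux constructions: pour que la sous-alg\`ebre engendr\'ee par $Z$ dans $\cR$ soit effectivement un anneau de Picard--Vessiot non param\'etr\'e, il faut disposer d'un $\cR$ \`a la fois $(\sg,\partial)$-simple et $\sg$-simple, ce qui n'est assur\'e par la construction brute de la sous-section \ref{subsec:DPV} que gr\^ace au raffinement de Wibmer. Une fois cette compatibilit\'e acquise, le reste se r\'eduit \`a une manipulation formelle et essentiellement tannakienne des deux correspondances de Galois.
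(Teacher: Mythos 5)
Votre preuve est correcte et suit pour l'essentiel la d\'emonstration standard de cet \'enonc\'e (celle de la Proposition 6.21 de \cite{HardouinSinger}, que le pr\'esent texte cite sans la reproduire) : on restreint l'action de $Gal^{\partial}(A)$ \`a la sous-alg\`ebre $R=F[Z,\det Z^{-1}]$, on constate que celle-ci est un anneau de Picard--Vessiot non param\'etr\'e, puis on combine les deux correspondances de Galois avec le fait que le stabilisateur d'un \'el\'ement du corps total des fractions est Zariski-ferm\'e (l'action \'etant rationnelle). La seule diff\'erence notable est votre fa\c{c}on d'obtenir la $\sg$-simplicit\'e de $R$ : vous invoquez le raffinement de Wibmer --- ce qui est l\'egitime ici, puisque $K$ \'etant diff\'erentiellement clos toutes les $(\sg,\partial)$-extensions de Picard--Vessiot sont isomorphes et l'on peut donc se ramener au mod\`ele de Wibmer ---, alors que \cite{HardouinSinger} \'etablit directement que $F[Z,\det Z^{-1}]$ est $\sg$-simple \`a l'int\'erieur de n'importe quelle $(\sg,\partial)$-extension de Picard--Vessiot, via la structure des anneaux $(\sg,\partial)$-simples et l'unicit\'e des anneaux de Picard--Vessiot sur $K$ alg\'ebriquement clos. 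Les deux routes aboutissent; la v\^otre a l'avantage de s'appuyer sur un \'enonc\'e d\'ej\`a rappel\'e dans la Remarque \ref{rema:descente}. Un seul point est pass\'e un peu vite : l'affirmation que $\wtilde L$ se plonge dans $L$ exige que tout non-diviseur de z\'ero de $R$ le reste dans $\cR$, ce qui se v\'erifie en comparant les idempotents des deux anneaux; c'est un fait standard dans ce contexte, mais il m\'erite une ligne.
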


\begin{rema}
Si $F$ a un corps des constantes $K$ algébriquement clos et si on considère une $(\sg,\partial)$-extension de Picard-Vessiot de $F$,
en suivant la construction de \cite{wibmer2011existence}, on peut construire un schéma en $\partial$-groupes défini sur $K$, dont les points
rationnels sur la clôture différentielle de $K$ peuvent être identifiés avec $Gal^\partial(A)$.
Évidemment, pour définir $Gal^\partial(A)$ il faut considérer la clôture différentielle $\wtilde K$ de $K$
et travailler sur le corps des fractions de $F\otimes_K \wtilde K$, avec $\sg$ agissant sur $\wtilde K$ comme l'identité ($F$ et $\wtilde K$ étant
linéairement disjoints).  Pour plus de détails voir \cite[\S1.2]{diviziohardouinPacific}.
\end{rema}

%%%%%%%%%%%%%%%%%%%%%%%%%%%%%%%%%%%%%%%%%%%%%%%%%%%%%%%%%%%
\subsection{Dépendance différentielle}
\label{sec:difdependency}
%%%%%%%%%%%%%%%%%%%%%%%%%%%%%%%%%%%%%%%%%%%%%%%%%%%%%%%%%%%

Une $(\sg,\partial)$-extension de Picard-Vessiot $\cR$ de $F$
pour \eqref{eq:sys} est un $Gal^{\partial} (A)$-torseur, dans le sens de Kolchin.
Cela implique, en particulier, que toutes les relations différentielles par rapport à
la dérivation $\partial$, satisfaites par une matrice fondamental de solutions de
\eqref{eq:sys}, sont entièrement déterminées par le groupe
$Gal^\partial(A)$:

\begin{theo} [Proposition 6.29 dans \cite{HardouinSinger}]\label{prop:degtrs}
Le degré de $\partial$-transcendance de $\cR$ sur $F$ est égal  à la
$\partial$-dimension de $Gal^\partial(A)$.
\end{theo}

Les notions de $\partial$-transcendance et $\partial$-dimension sont celles intuitives, notamment
le degré de $\partial$-transcendance de $\cR/F$ est égal au nombre maximal d'élément différentiellement
indépendants de $\cR$ sur $F$ et la
$\partial$-dimension de $Gal(A)$ est égal au degré de $\partial$-transcendance de son algèbre de
Hopf différentielle sur le corps des constantes $K$.
En gros, ce résultat dit que plus le groupe est petit, plus il y a des relations différentielles entre
les solutions de \eqref{eq:sys} dans $\cR$.

%%%%%%%%%%%%%%%%%%%%%%%%%%%%%%%%%%%%%%%%%%%%%%%%%%%%%%%%%%%
\subsection{Équations aux différences linéaires d'ordre $1$}
%%%%%%%%%%%%%%%%%%%%%%%%%%%%%%%%%%%%%%%%%%%%%%%%%%%%%%%%%%%

Il n'est pas difficile de se convaincre que les
sous-groupes différentiels de ${\mathbb G}_a^n$ sont définis par des équations différentielles
linéaires (voir \cite{cassdiffgr}). On déduit du Théorème \ref{prop:degtrs} le critère:

\begin{prop}[{Proposition 3.1 dans \cite{HardouinSinger}}]\label{prop:gatrans}
Soient  $a_1, ..., a_n$ des éléments non nuls de $F$
et $S$ une $(\sg,\partial)$-extension de $F$ telle que $S^\sg=F^\sg =K$.
Si $z_1,...,z_n \in S$ sont solutions des équations aux différences
$\sg (z_i) -z_i =a_i$, pour $i=1,...,n$,
alors $z_1,...,z_n \in S$ satisfont à une $\partial$-relation
différentielle non banale sur $F$
si et seulement s'il existe un polynôme différentiel linéaire homogène non nul
$L(Y_1,...,Y_n)$ à coefficients dans $K$ et un élément $f \in F$ tels que
$L(a_1, ..., a_n) =\sg (f)-f$.
\end{prop}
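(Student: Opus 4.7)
The plan is to package the $n$ scalar equations into a single matrix system whose parametrized Galois group embeds into $\mathbb{G}_a^n$, and then to combine Cassidy's description of the $\partial$-subgroups of $\mathbb{G}_a^n$ with Theorem \ref{prop:degtrs} and the standard dictionary between Galois-invariants and cocycles. Concretely, I would first rewrite the data $\sg(z_i)-z_i = a_i$ as a single equation $\sg(Y) = AY$, with $Y = (z_1,\ldots,z_n,1)^t$ and $A\in GL_{n+1}(F)$ the unipotent matrix having $1$'s on the diagonal, the $a_i$ in the last column, and zeros elsewhere. The $(\sg,\partial)$-subalgebra $\cR$ of $S$ generated by the $z_i$ and their $\partial$-derivatives is $(\sg,\partial)$-stable, and under the hypothesis $S^\sg = K$ it plays the role of a Picard-Vessiot ring for this system (up to quotient by a $(\sg,\partial)$-maximal ideal and a possible constant extension, \cf Remark \ref{rema:descente}). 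Any $\varphi\in Gal^\partial(A)$ fixes the last coordinate of $Y$ and therefore acts by $z_i \mapsto z_i + c_i(\varphi)$ with $c_i(\varphi)\in K$, so $Gal^\partial(A)$ is identified with a $\partial$-algebraic subgroup of $\mathbb{G}_a^n$.

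Cassidy's structure theorem (\cf \cite{cassdiffgr}) then says that every $\partial$-subgroup of $\mathbb{G}_a^n$ is cut out by a finite family of homogeneous linear differential polynomials with coefficients in $K$. Combining this with Theorem \ref{prop:degtrs} yields the equivalence: $z_1,\ldots,z_n$ are $\partial$-dependent over $F$ if and only if the $\partial$-dimension of $Gal^\partial(A)$ is strictly less than $n$, equivalently if and only if there is a non-zero homogeneous linear differential polynomial $L\in K\{Y_1,\ldots,Y_n\}_\partial$ vanishing identically on $Gal^\partial(A)$.

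The remaining step is to translate this vanishing condition into the announced cocycle relation. In one direction, if such an $L$ vanishes on $Gal^\partial(A)$, then $f := L(z_1,\ldots,z_n)$ is fixed by $Gal^\partial(A)$ and therefore belongs to $F$ by the Galois correspondence; since the coefficients of $L$ lie in $K = F^\sg$ and $\sg$ commutes with $\partial$, linearity of $L$ gives
$$
\sg(f) - f \;=\; L(\sg z_1,\ldots,\sg z_n) - L(z_1,\ldots,z_n) \;=\; L(a_1,\ldots,a_n).
$$
Conversely, given $L$ and $f\in F$ with $L(a_1,\ldots,a_n) = \sg(f)-f$, the same linearity computation shows that $L(z_1,\ldots,z_n)-f$ is $\sg$-invariant, hence lies in $S^\sg = K \subset F$, so $L(z_1,\ldots,z_n)\in F$ provides a non-trivial $\partial$-relation over $F$.

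The main obstacle, I expect, is the bookkeeping of the first paragraph: the parametrized Picard-Vessiot formalism of \S\ref{subsec:DPV} is developed abstractly over a $\partial$-closed base, whereas here the $z_i$ live inside a concrete $(\sg,\partial)$-extension $S$ of $F$ for which one only knows the constants hypothesis $S^\sg = K$. Making sure that Theorem \ref{prop:degtrs} and the Galois correspondence are genuinely available in this setting requires the descent techniques sketched in Remark \ref{rema:descente}; once this is in place, the rest of the argument is purely formal.
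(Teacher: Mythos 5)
Votre démonstration est correcte et suit essentiellement la même stratégie que celle du texte (donnée explicitement pour l'analogue méromorphe, la Proposition \ref{prop:gatransmero}) : réduction à un système unipotent dont le $\partial$-groupe de Galois se plonge dans $\mathbb G_a^n$, classification de Cassidy des $\partial$-sous-groupes de $\mathbb G_a^n$ par des polynômes différentiels linéaires homogènes, Théorème \ref{prop:degtrs} pour la propreté du groupe, puis l'argument d'invariance galoisienne donnant $f=L(z_1,\dots,z_n)\in F$ et $\sg(f)-f=L(a_1,\dots,a_n)$. La seule différence, purement cosmétique, est le conditionnement matriciel (une matrice unipotente de taille $n+1$ avec les $a_i$ en dernière colonne, au lieu de la matrice diagonale par blocs $2\times 2$ du texte).
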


On remarquera la similitude entre cet énoncé et le Théorème \ref{theo:BankKaukmannGamma}.
En effet, si on considère la dérivée logarithmique de l'équation de la fonction Gamma
$$
z(x+1)=z(x)+\frac{1}{x},
$$
on en déduit facilement un énoncé analogue sur le corps $F$, ayant un corps des constantes $K$
différentiellement clos par rapport à $\partial$.
Cette dernière hypothèse n'est pas vérifiée dans le cas des fonctions méromorphes.
Il est néanmoins possible de prouver un critère de ce type pour les solutions méromorphes.
On reviendra de nouveau sur ce point.

%%%%%%%%%%%%%%%%%%%%%%%%%%%%%%%%%%%%%%%%%%%%%%%%%%%%%%%%%%%
\subsection{Intégrabilité}
%%%%%%%%%%%%%%%%%%%%%%%%%%%%%%%%%%%%%%%%%%%%%%%%%%%%%%%%%%%

La proposition suivante établit le lien entre la structure du
$\partial$-groupe de Galois et l'intégrabilité du système aux différences
par rapport à l'opérateur différentiel.
Ce genre de problématique se retrouve très naturellement lorsque, par exemple, on
cherche une paire de Lax pour une équation qui mélange opérateur différentiels et aux différences.
Ce type d'équations est appelé \emph{équations à retard} (\emph{delay equations} dans la littérature en anglais), ou bien, dans le cas
spécifique des équations aux $q$-différences, équations du pantographe.
Elles se retrouvent naturellement
lorsque l'équation décrit un système dépendant de la variable libre,
disons le temps $t$, à la fois de façon continue et discrète.
La définition de $\partial$-groupe constant est expliquée immédiatement
après l'énoncé.

\begin{prop}[{Proposition 2.9 dans \cite{HardouinSinger}}]\label{prop:integrabilite}
Les assertions suivantes sont équivalentes:
\begin{enumerate}

\item
Le $\partial$-groupe de Galois
$Gal^\partial (A)$ est conjugué sur $K$ avec un $\partial$-groupe constant.

\item
Il existe $B\in M_n(F)$ tel que le système
$$
\left\{\begin{array}{l} \sg(Y)=AY \\
\partial Y=BY
\end{array} \right.
$$
est intégrable, c'est-à-dire que les matrices $B$ et $A$ satisfont à l'équation fonctionnelle
suivante, induite par la commutativité entre $\sg$ et $\partial$:
$$
\sg(B)A= \partial(A) +A B.
$$
\end{enumerate}
\end{prop}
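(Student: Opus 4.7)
Les deux implications reposent sur le même principe: la $(\sg,\partial)$-extension de Picard-Vessiot $\cR/F$ est un $Gal^{\partial}(A)$-torseur agissant sur une matrice fondamentale $Z \in GL_\nu(\cR)$ par $\varphi(Z) = ZM_\varphi$ avec $M_\varphi \in GL_\nu(K)$, et l'on traduit l'intégrabilité en la condition que les $M_\varphi$ soient $\partial$-constantes, à conjugaison près.

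Pour $(2) \Rightarrow (1)$, l'intégrabilité permet de construire une $(\sg,\partial)$-extension contenant une matrice $Z$ vérifiant simultanément $\sg Z = AZ$ et $\partial Z = BZ$. Comme la relation $\partial Z = BZ$ exprime algébriquement toutes les dérivées successives de $Z$ en fonction de $Z$ lui-même, cette extension est a fortiori une $(\sg,\partial)$-extension de Picard-Vessiot pour \eqref{eq:sys} au sens de la Définition \ref{defn:PV}. Par unicité, valable car $(K,\partial)$ est différentiellement clos, on peut supposer que $\cR$ contient elle-même une telle $Z$. En dérivant l'identité $\varphi(Z) = ZM_\varphi$ puis en substituant $\partial Z = BZ$ des deux côtés, il reste $Z\partial(M_\varphi) = 0$, d'où $\partial M_\varphi = 0$ pour tout $\varphi \in Gal^{\partial}(A)$. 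Par conséquent, dans la représentation donnée par $Z$, le groupe de Galois est contenu dans $GL_\nu(K^\partial)$: c'est un $\partial$-groupe constant.

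Réciproquement, pour $(1) \Rightarrow (2)$, soit $C \in GL_\nu(K)$ conjuguant $Gal^{\partial}(A)$ dans un $\partial$-groupe constant, et posons $Z' := ZC$. Comme $\sg(C) = C$, la matrice $Z'$ reste une matrice fondamentale de \eqref{eq:sys}, et l'action galoisienne devient $\varphi(Z') = Z' \tilde M_\varphi$ avec $\tilde M_\varphi := C^{-1} M_\varphi C$ désormais $\partial$-constante. Posons $B := \partial(Z') (Z')^{-1}$; un bref calcul utilisant $\partial \tilde M_\varphi = 0$ livre $\varphi(B) = B$ pour tout $\varphi$, donc $B \in M_\nu(F)$ par la correspondance de Galois. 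L'équation fonctionnelle $\sg(B)A = \partial(A) + AB$ s'obtient alors en exprimant $\sg(\partial Z')$ de deux façons, à partir de $\sg(Z') = AZ'$ et $\partial(Z') = BZ'$.

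Le point délicat est la construction, dans $(2) \Rightarrow (1)$, d'une matrice fondamentale commune aux deux systèmes à l'intérieur de $\cR$: on s'appuie pour cela sur l'unicité des $(\sg,\partial)$-extensions de Picard-Vessiot, laquelle requiert la clôture différentielle de $(K,\partial)$; sans cette hypothèse il conviendrait d'invoquer une descente des constantes, comme suggéré dans la Remarque \ref{rema:descente}.
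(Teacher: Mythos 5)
Votre preuve est correcte et suit exactement l'argument standard (celui de la Proposition 2.9 de Hardouin--Singer, que le pr\'esent texte cite sans le reproduire) : traduction de l'int\'egrabilit\'e par la $\partial$-constance des matrices $M_\varphi$ via une matrice fondamentale commune aux deux syst\`emes, unicit\'e de l'extension de Picard--Vessiot param\'etr\'ee sur $K$ diff\'erentiellement clos pour le sens direct, et $B=\partial(Z')(Z')^{-1}$ rendu $Gal^\partial(A)$-invariant par conjugaison pour la r\'eciproque. Rien \`a redire.
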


Soient $K$ un $\partial$-corps et $C$ son sous-corps des $\partial$-constantes.
On dit qu'un $\partial$-groupe linéaire $G \subset GL_\nu$ défini sur $K$
est un \emph{$\partial$-groupe constant} (ou, plus brièvement, qu'il est $\partial$-constant)
si son idéal de définition dans
$K\{X,\frac{1}{\det\left(X\right)}\}_\partial$
contient les polynômes différentiels
$\partial(X_{i,j})$, pour tout $i,j=1,\dots,\nu$.
Puisque $K$ est différentiellement clos, cela est équivalent
au fait que les points $K$-rationnels de $G$ coïncident avec
les points $C$-rationnels d'un groupe linéaire défini sur $C$.
On en déduit le corollaire suivant:

\begin{coro}
Considérons un système \eqref{eq:sys} à coefficients dans $F$ et son $\partial$-groupe de Galois
$Gal^\partial(A)$. S'il existe une représentation fidèle $\varrho:Gal^\partial(A)\hookrightarrow GL_\mu(K)$
et une matrice dans l'image de $\varrho$ dont le polynôme minimal n'est pas à coefficients dans
$C$, alors \eqref{eq:sys} n'est pas intégrable au sens de la proposition précédente.
\end{coro}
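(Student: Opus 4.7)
Notre plan est de raisonner par contraposition. Supposons que le syst\`eme~\eqref{eq:sys} est int\'egrable~; d'apr\`es la Proposition~\ref{prop:integrabilite}, il existe alors une matrice $P\in GL_\nu(K)$ telle que $P^{-1}\,Gal^\partial(A)\,P$ soit un $\partial$-groupe constant. Puisque $K$ est diff\'erentiellement clos, les $K$-points de ce groupe co\"{\i}ncident avec les $C$-points d'un groupe alg\'ebrique lin\'eaire $G_0$ d\'efini sur $C$, et toute matrice de $P^{-1}\,Gal^\partial(A)\,P$ a donc son polyn\^ome minimal \`a coefficients dans $C$ (puisque ce polyn\^ome est invariant par conjugaison).

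L'\'etape cruciale consiste \`a montrer que cette propri\'et\'e se transmet \`a l'image d'une repr\'esentation fid\`ele quelconque $\varrho:Gal^\partial(A)\hookrightarrow GL_\mu(K)$. En composant avec la conjugaison par $P$, on obtient une repr\'esentation $\partial$-alg\'ebrique fid\`ele de $G_0$ (vu comme $\partial$-groupe constant sur $K$) dans $GL_\mu(K)$. Or la structure diff\'erentielle est triviale sur la source, et $G_0(C)$ est Zariski-dense dans $G_0(K)$ car $C$ est alg\'ebriquement clos~; cela permet d'identifier cette donn\'ee \`a une repr\'esentation alg\'ebrique ordinaire de $G_0$ d\'efinie sur $K$. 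La th\'eorie classique des repr\'esentations rationnelles des groupes alg\'ebriques sur un corps alg\'ebriquement clos entra\^{\i}ne alors qu'une telle repr\'esentation est \'equivalente, \`a un changement de base dans $GL_\mu(K)$ pr\`es, \`a une repr\'esentation d\'efinie sur $C$. Autrement dit, il existe $Q\in GL_\mu(K)$ tel que $Q^{-1}\,\varrho(Gal^\partial(A))\,Q\subset GL_\mu(C)$.

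La conclusion en d\'ecoule imm\'ediatement~: toute matrice dans l'image de $\varrho$ est $GL_\mu(K)$-conjugu\'ee \`a un \'el\'ement de $GL_\mu(C)$, et puisque le polyn\^ome minimal est invariant par conjugaison, il appartient \`a $C[T]$, ce qui contredit l'hypoth\`ese. Le point technique le plus d\'elicat \`a justifier rigoureusement est la descente des repr\'esentations \'evoqu\'ee ci-dessus~: il s'agit de traduire la trivialit\'e de la $\partial$-structure sur $G_0$ en une descente du corps des coefficients de la repr\'esentation, de $K$ \`a $C$, modulo conjugaison dans $GL_\mu(K)$. Une alternative, plus conceptuelle mais qui \'evite ce point, serait d'invoquer directement la fonctorialit\'e de la Proposition~\ref{prop:integrabilite} appliqu\'ee \`a la repr\'esentation $\varrho$, la notion d'int\'egrabilit\'e \'etant, par construction, invariante par passage \`a une repr\'esentation fid\`ele.
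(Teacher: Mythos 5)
Votre strat\'egie d'ensemble est la bonne (contraposition, r\'eduction \`a un $\partial$-groupe constant via la Proposition~\ref{prop:integrabilite}, invariance du polyn\^ome minimal par conjugaison) et votre premier paragraphe est correct. En revanche, l'\'etape que vous qualifiez vous-m\^eme de cruciale est fausse en g\'en\'eral : une repr\'esentation fid\`ele d'un $\partial$-groupe constant $G_0(C)$ dans $GL_\mu(K)$ n'est pas toujours conjugu\'ee, dans $GL_\mu(K)$, \`a une repr\'esentation \`a valeurs dans $GL_\mu(C)$. Prenez $G_0=\mathbb{G}_a^2$ et $\lambda\in K\smallsetminus C$ ; l'application
$$
(a,b)\in C^2\longmapsto\begin{pmatrix}1&a+\lambda b\\0&1\end{pmatrix}\in GL_2(K)
$$
est un morphisme polynomial de groupes, injectif sur $G_0(C)$ (si $a+\lambda b=0$ avec $a,b\in C$ et $\lambda\notin C$, alors $a=b=0$), mais aucune conjugaison ne ram\`ene son image dans $GL_2(C)$ : apr\`es conjugaison par $Q\in GL_2(K)$ l'image s'\'ecrit $\l\{I+(a+\lambda b)N\r\}$ pour un nilpotent non nul fixe $N$, et $N\in M_2(C)$ (cas $a=1$, $b=0$) joint \`a $\lambda N\in M_2(C)$ (cas $a=0$, $b=1$) forcerait $\lambda\in C$. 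La th\'eorie classique des repr\'esentations rationnelles ne fournit la descente \`a $C$, \`a conjugaison pr\`es, que pour les groupes r\'eductifs ; elle \'echoue en g\'en\'eral dans la direction unipotente, comme le montre cet exemple. Votre alternative tannakienne est plus proche d'un argument viable, mais elle exige de savoir que toute repr\'esentation diff\'erentielle de $Gal^\partial(A)$ provient d'une construction d'alg\`ebre lin\'eaire sur le module aux diff\'erences et que l'int\'egrabilit\'e est stable par ces constructions ; en l'\'etat ce n'est qu'une esquisse.

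Le corollaire reste n\'eanmoins vrai : il suffit de remplacer votre descente par un \'enonc\'e plus faible, \`a savoir que toutes les valeurs propres de $\varrho(g)$ appartiennent \`a $C$ (c'est bien le cas dans le contre-exemple ci-dessus, o\`u le polyn\^ome minimal vaut $(T-1)^2\in C[T]$). Gardez votre observation, correcte et essentielle, que $\varrho$ restreinte aux points constants est polynomiale et se prolonge, par densit\'e de Zariski de $G_0(C)$ dans $G_{0,K}$, en un morphisme de groupes alg\'ebriques $\tilde\varrho: G_{0,K}\to GL_{\mu,K}$. Pour $g\in G_0(C)$, la d\'ecomposition de Jordan $g=g_sg_u$ a lieu dans $G_0(C)$ et est pr\'eserv\'ee par $\tilde\varrho$ ; la cl\^oture de Zariski du groupe engendr\'e par $g_s$ est un groupe diagonalisable $D$ d\'efini sur $C$, dont les caract\`eres ne changent pas par extension du corps de base et sont donc tous d\'efinis sur $C$ ; les valeurs propres de $\varrho(g)$, qui sont celles de $\tilde\varrho(g_s)$, sont de la forme $\chi(g_s)$ avec $\chi$ d\'efini sur $C$ et $g_s\in D(C)$, donc appartiennent \`a $C$. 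Le polyn\^ome minimal de $\varrho(g)$ est alors dans $C[T]$, ce qui donne la contradiction cherch\'ee.
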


\begin{rema}
Il existe un critère d'intégrabilité analogue pour des équations aux différences dépendant de plusieurs paramètres.
Dans le cas des équations différentielles d'ordre 2, dépendant de plusieurs paramètres, il est possible
de vérifier l'intégrabilité paramètre par paramètre pour conclure à l'intégrabilité globale \cite{dreyfus2011kovacic}.
Ce résultat a été prouvé aussi pour les équations différentielles d'ordre quelconque dans \cite{gorchinskiy2012isomonodromic}.
La preuve repose sur des théorèmes de structure des groupes algébriques différentiels et donc un résultat analogue devrait être
vrai aussi pour les équations aux différences.
\end{rema}

Selon \cite{cassdiffgr}, si $H$ un $\partial$-groupe sur un corps différentiellement clos $K$,
dont la clôture de Zariski est
un groupe algébrique linéaire simple $G$ sur $K$,
alors soit $H=G$ soit $H$ est conjugué sur $K$ à un $\partial$-groupe constant.
Compte tenu de la Proposition \ref{prop:degtrs}, on obtient:

\begin{coro}
Si $Gal(A)$ est un groupe algébrique simple,
soit nous sommes dans la situation de la Proposition \ref{prop:integrabilite}
soit il n'existe aucune relation différentielle non banale entre les éléments d'une matrice fondamentale
de solutions de
$\sg(Y)=AY$ à coefficients dans $L$.
\end{coro}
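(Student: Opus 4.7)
\medskip

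\noindent\textbf{Plan de preuve.} Posons $H=Gal^\partial(A)$ et $G=Gal(A)$. D'apr�s la Proposition~6.21 cit�e plus haut, $G$ est la cl�ture de Zariski de $H$ dans $GL_\nu(K)$. L'id�e est alors d'appliquer directement le th�or�me de structure de Cassidy rappel� juste avant l'�nonc�: puisque $G$ est, par hypoth�se, un groupe alg�brique lin�aire simple sur le corps diff�rentiellement clos $K$, on a soit $H=G$, soit $H$ est conjugu� sur $K$ � un $\partial$-groupe constant. Toute la preuve consiste � traduire s�par�ment ces deux cas dans la dichotomie annonc�e.

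Dans la premi�re alternative de Cassidy, $H$ est conjugu� � un $\partial$-groupe constant, et la Proposition~\ref{prop:integrabilite} donne imm�diatement l'existence d'une matrice $B$ rendant int�grable le syst�me $\sg(Y)=AY$, $\partial Y=BY$; on est donc dans la situation de cette proposition, c'est-�-dire dans le premier cas de l'�nonc�.

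Dans la seconde alternative, $H=G$ en tant que $\partial$-groupes: l'id�al de d�finition de $H$ dans $K\{X,\det X^{-1}\}_\partial$ est engendr�, comme $\partial$-id�al, par l'id�al alg�brique d�finissant $G$, sans aucune �quation diff�rentielle suppl�mentaire. Par le Th�or�me~\ref{prop:degtrs}, le degr� de $\partial$-transcendance de $\cR$ sur $F$ co�ncide avec la $\partial$-dimension de $H$, qui dans ce cas est celle de $G$ vu comme $\partial$-groupe purement alg�brique. Il en r�sulte que toute relation diff�rentielle satisfaite par les entr�es d'une matrice fondamentale $Z\in GL_\nu(L)$ de solutions de \eqref{eq:sys} est ``banale'', au sens o� elle se d�duit par d�rivation des seules relations alg�briques qui expriment l'appartenance de $Z$ � un $G$-torseur; aucune relation $\partial$-alg�brique nouvelle ne peut donc lier les entr�es de $Z$.

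La seule �tape non formelle est la premi�re: l'application de la dichotomie de Cassidy dans \cite{cassdiffgr} pour les $\partial$-groupes dont la cl�ture de Zariski est simple. Le reste, � savoir l'identification de chaque branche avec une des deux conclusions, se fait m�caniquement via la Proposition~\ref{prop:integrabilite} d'une part et le Th�or�me~\ref{prop:degtrs} d'autre part; c'est d'ailleurs pour cela que le corollaire est effectivement un corollaire et non un th�or�me.
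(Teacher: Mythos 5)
Votre d\'emonstration est correcte et suit essentiellement la m\^eme voie que le texte, qui ne donne d'ailleurs aucune preuve explicite~: le corollaire y est d\'eduit directement de la dichotomie de Cassidy rappel\'ee juste avant l'\'enonc\'e, combin\'ee \`a la Proposition~\ref{prop:integrabilite} pour la branche o\`u $Gal^\partial(A)$ est conjugu\'e \`a un $\partial$-groupe constant, et au Th\'eor\`eme~\ref{prop:degtrs} (joint \`a la remarque qui suit le corollaire sur les relations banales) pour la branche $Gal^\partial(A)=Gal(A)$. Votre identification des deux alternatives avec les deux conclusions est exactement l'argument implicite du papier.
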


\begin{rema}
Si des relations algébriques entre les éléments
d'une matrice fondamentale de solutions existent,
on peut toujours en déduire des relations différentielles par dérivation. On peut
considérer
que celles-ci sont des relations différentielles banales.
\end{rema}

%%%%%%%%%%%%%%%%%%%%%%%%%%%%%%%%%%%%%%%%%%%%%%%%%%%%%%%%%%%%%%%%%%%
%%%%%%%%%%%%%%%%%%%%%%%%%%%%%%%%%%%%%%%%%%%%%%%%%%%%%%%%%%%%%%%%%%%
\section{Transcendance différentielle des solutions méromorphes}
%%%%%%%%%%%%%%%%%%%%%%%%%%%%%%%%%%%%%%%%%%%%%%%%%%%%%%%%%%%%%%%%%%%
%%%%%%%%%%%%%%%%%%%%%%%%%%%%%%%%%%%%%%%%%%%%%%%%%%%%%%%%%%%%%%%%%%%

On a vu que la théorie de Galois fournit des critères
de transcendance différentielle pour des solutions abstraites d'une équation aux différences.
Dans le cas de la fonction Gamma d'Euler, par exemple, en s'inspirant de
la construction plus haut, nous pourrions considérer
l'anneau
$$
\cR_\Gamma=\mathcal P(x)\l[\Gamma(x), \Gamma^\p(x),\Gamma^{(2)}(x),\dots,\frac{1}{\Gamma(x)}\r],
$$
où $\cP$ est le corps des fonctions méromorphes sur $\C$ et $1$-périodiques.
L'anneau $\cR$ est bien un $(\tau,\partial)$-anneau, par rapport à l'opérateur
$\tau:f(x)\mapsto f(x+1)$ et à la dérivation  $\partial=\frac{d}{dx}$, et,
puisque $\cR_\Gamma\subset\cM er(\C)$,
ses constantes coïncident avec $\cP$. Par contre il est assez difficile, en général, d'établir si
$\cR_\Gamma$ est un $(\tau,\partial)$-anneau simple, donc toute la discussion précédente tombe (ou risque de tomber)
à l'eau.
Pour s'en sortir, il suffit de considérer que
la clôture différentielle $\wtilde\cP$ de $\cP$ par rapport à $\partial$.
Il n'est pas difficile de voir que le corps $\cM er(\C)$ des fonctions méromorphes sur $\C$ et $\wtilde\cP$
sont linéairement disjoints sur $\cP$ (voir le Lemme \ref{lemm:lindisj} ci-dessus).
On peut alors considérer l'anneau $\cR_\Gamma\otimes_\cP\wtilde\cP$
et le comparer à une $(\tau,\partial)$-extension de Picard-Vessiot, au sens de la Définition \ref{defn:PV}.
On va formaliser ces considérations.
\par
On appellera $(\cF,\sg,\partial)$ l'un des deux $(\sg,\partial)$-corps\footnote{En réalité,
nous n'avons pas besoin de fixer un choix pour $\partial$: les propositions qui suivent
sont vraies pour toute dérivation commutant avec
les deux choix de $\sg$ ci-dessous. Ca sera le cas dans \S\ref{sec:qdiff}.}
suivants:
\begin{enumerate}
\item
Le corps $(\cF,\sg,\partial)$ est une extension de $(\cP(x),\tau,\frac{d}{dx})$
contenue dans $(\cM,\sg,\partial):=(\cM er(\C),\tau,\frac{d}{dx})$.

\item
Pour $q\in\C$, $|q|\neq 1$, on considère le corps des fonctions elliptiques
$\cE_q$, autrement dit le sous-corps du corps $\cM er (\C^*)$ des fonctions méromorphes
sur $\C^*$ des fonctions invariantes par $\sg:=\sgq:f(qx)\mapsto f(x)$. Dans
ce cas on considère une extension $(\cF,\sg,\partial)$ de
$(\cE_q(x),\sgq,x\frac{d}{dx})$ contenue dans $\cM=\cM er(\C^*)$.
\end{enumerate}

Ces deux situations ont beaucoup en commun, mais diffèrent par la nature différentielle du corps des constantes.
En effet, le corps $\cK$ des éléments $\sg$-invariants de $\cF$ coïncide avec celui de $\cM$, donc $\cK=\cP$ pour $\sg=\tau$
et $\cK=\cE_q$ si $\sg=\sgq$.
Il est bien connu que le corps des fonctions elliptiques $\cE_q$ est différentiellement algébrique.
Pour le voir il est suffisant de passer de la notation multiplicative à la notation additive et de
se souvenir du fait que la fonction $\wp(x)$ de Weierstrass satisfait à une équation différentielle
d'ordre $2$.
D'un autre côté, on a vu que $\cP$ contient au moins $x\mapsto\zeta(\sin(2\pi x))$, qui est différentiellement
transcendant.
Néanmoins on a:

\begin{lemm}\label{lemm:lindisj}
La clôture différentielle $\wtilde\cK$ de $\cK$ et le corps $\cM$ (resp. $\cF$)
sont linéairement disjoints sur $\cK$.
\end{lemm}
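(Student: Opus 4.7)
The plan is to prove linear disjointness by the classical minimal-relation argument, exploiting the fact that $\cK = \cM^\sg$. Since $\cF \subseteq \cM$ and the functor $-\otimes_\cK \wtilde\cK$ is exact ($\cK$ being a field), the natural map $\cF \otimes_\cK \wtilde\cK \hookrightarrow \cM \otimes_\cK \wtilde\cK$ is injective, so it suffices to establish linear disjointness of $\cM$ and $\wtilde\cK$ over $\cK$. Concretely, one wants to show that the tensor product $U := \cM \otimes_\cK \wtilde\cK$ is an integral domain.

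The first step is to extend $\sg$ to $U$. I would define $\wtilde\sg := \sg \otimes \mathrm{id}_{\wtilde\cK}$; this is a well-defined $\cK$-algebra automorphism of $U$ because $\sg$ fixes $\cK$ pointwise. A direct computation in any $\cK$-basis $\{a_\alpha\}$ of $\wtilde\cK$ shows that the fixed ring is $U^{\wtilde\sg} = 1 \otimes \wtilde\cK \simeq \wtilde\cK$: $\wtilde\sg$-invariance of $\sum m_\alpha (1 \otimes a_\alpha)$ forces $\sg(m_\alpha) = m_\alpha$, hence $m_\alpha \in \cM^\sg = \cK$.

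The core argument is then the minimal-relation step: suppose for contradiction that there exist $\cK$-linearly independent $a_1, \dots, a_n \in \wtilde\cK$ and $m_1, \dots, m_n \in \cM$, not all zero, satisfying $\sum_{i=1}^n m_i a_i = 0$ in some common overfield (equivalently, giving a non-trivial zero-divisor in $U$). I take $n$ minimal and, dividing by a nonzero coefficient, normalize $m_1 = 1$. Applying $\sg$ to the coefficients and subtracting yields
\[
\sum_{i=2}^{n} \bigl(\sg(m_i) - m_i\bigr)\, a_i = 0,
\]
a relation on strictly fewer terms. By minimality, each $\sg(m_i) = m_i$, hence $m_i \in \cM^\sg = \cK$. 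But then the original relation is a non-trivial $\cK$-linear dependence among the $a_i$, contradicting their $\cK$-linear independence.

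The delicate point is to formalize ``non-trivial relation'' and ``minimality'' inside the abstract tensor product $U$, where one works with a zero-divisor $\xi = \sum_{i=1}^n m_i \otimes a_i$ of minimal support $n$. Since elements $m \otimes a$ with $m, a \neq 0$ are units, one has $n \geq 2$; after normalizing $m_1 = 1$, either $\wtilde\sg(\xi) - \xi = 0$, in which case $\xi \in U^{\wtilde\sg} = \wtilde\cK$ is a nonzero element of a field, hence a unit --- contradicting the zero-divisor property --- or $\wtilde\sg(\xi) - \xi$ has support $< n$, and from $\xi \eta = 0$ one infers $(\wtilde\sg(\xi) - \xi) \cdot \eta\, \wtilde\sg(\eta) = 0$, so that $\wtilde\sg(\xi) - \xi$ is itself a zero-divisor as soon as $\eta\, \wtilde\sg(\eta) \neq 0$, contradicting the minimality of $n$. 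Securing $\eta\, \wtilde\sg(\eta) \neq 0$ is the subtle technical issue: it is handled by choosing the pair $(\xi, \eta)$ so as to minimize a suitable joint invariant, or by iterating $\wtilde\sg$ along the orbit of $\eta$.
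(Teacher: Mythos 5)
Your core computation --- take a minimal family $\{a_i\}\subset\wtilde\cK$ that is $\cK$-free but becomes dependent over $\cM$, normalize one coefficient to $1$, apply $\sg$ coefficient-wise and subtract, then conclude from $\cM^\sg=\cK$ --- is exactly the paper's proof, which consists of precisely these four lines and nothing more. Where you part company with the paper is your last paragraph, and that is where the genuine gap sits. You rightly observe that inside $U=\cM\otimes_\cK\wtilde\cK$ a literal relation $\sum_i m_i\otimes a_i=0$ with the $a_i$ linearly independent over $\cK$ is impossible for trivial reasons (the tensor product is a free $\cM$-module), so the non-vacuous content must concern zero-divisors; but the reduction you then attempt --- \emph{$U$ has no zero-divisors} --- is not closed (``minimizing a suitable joint invariant'' or ``iterating $\wtilde\sg$ along the orbit of $\eta$'' is not an argument, and the obstruction $\eta\,\wtilde\sg(\eta)=0$ does occur), and in fact it cannot be closed, because that statement is false. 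Take $\sg=\tau$ and $\cK=\cP$: the function $e^{i\pi x}$ lies in $\cM$, is algebraic of degree $2$ over $\cP$ (its square $e^{2i\pi x}$ is $1$-periodic) but is not itself $1$-periodic; since $\wtilde\cP$ is differentiellement clos, it is algebraically closed and contains some $y$ with $y^2=e^{2i\pi x}$, and then
$$
\l(e^{i\pi x}\otimes 1-1\otimes y\r)\l(e^{i\pi x}\otimes 1+1\otimes y\r)=e^{2i\pi x}\otimes 1-1\otimes e^{2i\pi x}=0
$$
exhibits two nonzero zero-divisors in $\cM\otimes_\cP\wtilde\cP$ (both factors are nonzero because $\{1,y\}$ extends to a $\cK$-basis of $\wtilde\cK$).

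What your minimal-length argument actually proves --- and what the subsequent Picard--Vessiot comparison really uses --- is the $\wtilde\sg$-simplicity of $U$: every nonzero ideal $I$ stable under $\wtilde\sg=\sg\otimes\mathrm{id}$ equals $U$. Run the same computation on an element $\xi=\sum_{i=1}^n m_i\otimes a_i\in I$ of minimal length with $m_1=1$: the element $\wtilde\sg(\xi)-\xi$ again lies in $I$ (there is no cofactor $\eta$ to control, since $I$ is closed under $\wtilde\sg$ and under subtraction), has strictly smaller length, hence vanishes; by freeness all $m_i$ are $\sg$-invariant, so $\xi\in 1\otimes\wtilde\cK$ is a unit. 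This removes exactly the technical difficulty you flagged, at the price of establishing a weaker conclusion than literal linear disjointness; to get disjointness itself one must add the hypothesis that $\cK$ est alg\'ebriquement ferm\'e dans $\cM$, which the example above shows is not automatic. I recommend either reformulating the target as a simplicity statement or making that hypothesis explicit, rather than trying to repair the zero-divisor version as written.
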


\begin{proof}
Soit $\{\a_i\}_{i\in I}$ une famille finie d'éléments de $\wtilde\cK$ linéairement indépendants
sur $\cK$, mais qui deviennent liés sur $\cM$ (resp. $\cF$) en tant qu'éléments de $\cF\otimes_\cK\wtilde\cK$.
On suppose qu'elle est minimale, c'est-à-dire que
pour tout $\iota\in I$ la famille $\{\a_i\}_{i\in I,i\neq\iota}$ reste linéairement
indépendante sur $\cM$ (resp. $\cF$).
Soit $\sum_i\la_i\a_i=0$ une combinaison linéaire non banale des $\a_i$ sur $\cM$ (resp. $\cF$). On peut supposer qu'il
existe $\iota\in I$ tel que $\la_\iota=1$. On obtient une contradiction en
considérant $\sum_i(\la_i-\sg(\la_i))\a_i=0$.
\end{proof}

Soit $\sg Y=AY$ un système aux différences tel que $A(x)\in GL_\nu(\cF)$,
ayant une matrice fondamentale de solutions $U\in GL_\nu(\cM)$.
On appelle $\cR_{\cM}$ l'anneau $\cF\{U, \det U^{-1}\}_\partial\subset\cM$ et $\cR^\p_{\cM}$ un quotient de
l'anneau des polynômes différentiels $\cF\{X, \det X^{-1}\}_\partial$ par un $(\sg,\partial)$-idéal maximal. On note aussi $\cR$ la $(\sg,\partial)$-extension
de Picard-Vessiot sur $\wtilde\cF=Frac(\cF\otimes_\cK\wtilde\cK)$ associée à $\sg Y=AY$.

\begin{lemm}
$\cR_\cM\otimes_\cK\wtilde\cK
\cong\cR_\cM^\p\otimes_\cK\wtilde\cK
\cong\cR$.
\end{lemm}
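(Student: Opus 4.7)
Mon plan est de montrer que $\cR_\cM\otimes_\cK\wtilde\cK$ et $\cR_\cM^\p\otimes_\cK\wtilde\cK$ sont toutes deux des $(\sg,\partial)$-extensions de Picard-Vessiot sur $\wtilde\cF:=\mathrm{Frac}(\cF\otimes_\cK\wtilde\cK)$ pour le syst\`eme $\sg Y=AY$, puis d'invoquer l'unicit\'e de telles extensions, valable car $(\wtilde\cK,\partial)$ est $\partial$-diff\'erentiellement clos (\S\ref{subsec:DPV}). Le Lemme \ref{lemm:lindisj} sera l'outil fondamental pour contr\^oler les $\sg$-constantes et les $(\sg,\partial)$-id\'eaux lors de la tensorisation par $\wtilde\cK$.

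Pour $\cR_\cM^\p\otimes_\cK\wtilde\cK$: on part de $\cR_\cM^\p=\cF\{X,\det X^{-1}\}_\partial/I$ o\`u $I$ est un $(\sg,\partial)$-id\'eal maximal. La disjonction lin\'eaire identifie $\cF\{X,\det X^{-1}\}_\partial\otimes_\cK\wtilde\cK$ \`a une sous-$(\sg,\partial)$-alg\`ebre de $\wtilde\cF\{X,\det X^{-1}\}_\partial$, et il s'agit de montrer que l'id\'eal engendr\'e par $I$ y reste maximal. Un argument de descente, utilisant que $\sg$ agit trivialement sur $\wtilde\cK$ et le Lemme \ref{lemm:lindisj}, montre que tout $(\sg,\partial)$-id\'eal strictement plus grand se contracterait \`a un $(\sg,\partial)$-id\'eal de $\cF\{X,\det X^{-1}\}_\partial$ contenant strictement $I$, contradiction.

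Pour $\cR_\cM\otimes_\cK\wtilde\cK$: l'\'evaluation $X\mapsto U$ fournit une surjection $\cF\{X,\det X^{-1}\}_\partial\to\cR_\cM$. Comme $\cR_\cM\subset\cM$ et $\cM^\sg=\cK$, on a $(\cR_\cM)^\sg=\cK$, d'o\`u $(\cR_\cM\otimes_\cK\wtilde\cK)^\sg=\wtilde\cK$ par disjonction lin\'eaire. L'anneau $\cR_\cM\otimes_\cK\wtilde\cK$ est alors engendr\'e comme $\partial$-anneau sur $\wtilde\cF$ par la matrice fondamentale $U\otimes 1$, avec les bonnes constantes. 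Pour prouver sa simplicit\'e, je consid\'ererais un quotient par un $(\sg,\partial)$-id\'eal maximal $J$: ce quotient serait une extension de Picard-Vessiot de $\wtilde\cF$, donc isomorphe \`a $\cR$, et la comparaison des degr\'es de $\partial$-transcendance (Th\'eor\`eme \ref{prop:degtrs}) forcerait $J=0$.

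La principale difficult\'e sera la pr\'eservation de la maximalit\'e des $(\sg,\partial)$-id\'eaux par l'extension des constantes $\cK\to\wtilde\cK$. Tensoriser un anneau simple par un autre ne donne pas en g\'en\'eral un anneau simple, mais ici la structure particuli\`ere de Picard-Vessiot (anneau engendr\'e par une matrice fondamentale au-dessus d'un corps dont les $\sg$-constantes sont $\partial$-closes) combin\'ee \`a la disjonction lin\'eaire rend l'argument possible. Une fois cette \'etape technique \'etablie, l'unicit\'e des extensions de Picard-Vessiot param\'etr\'ees conclut directement aux trois isomorphismes.
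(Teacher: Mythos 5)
Votre architecture g\'en\'erale --- montrer que $\cR_\cM\otimes_\cK\wtilde\cK$ et $\cR^\p_\cM\otimes_\cK\wtilde\cK$ sont des $(\sg,\partial)$-extensions de Picard-Vessiot sur $\wtilde\cF$, puis invoquer l'unicit\'e au-dessus du corps de constantes $\partial$-clos $\wtilde\cK$ --- est bien celle des r\'ef\'erences que le texte se contente de citer (le papier ne donne pas de preuve : il renvoie au Corollaire 3.3 et \`a la Proposition 3.4 de \cite{diviziohardouinComp}), et vous identifiez correctement la difficult\'e centrale. En revanche, les deux m\'ecanismes que vous proposez pour la surmonter ne tiennent pas tels quels. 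Pour $\cR^\p_\cM\otimes_\cK\wtilde\cK$, l'affirmation qu'un $(\sg,\partial)$-id\'eal strictement plus grand que l'id\'eal engendr\'e par $I$ se contracterait en un id\'eal contenant strictement $I$ est fausse en g\'en\'eral : une extension stricte peut parfaitement se contracter exactement sur $I$ (d\'ej\`a pour $(x-i)\subset\Q(i)[x]/(x^2+1)$ au-dessus de l'id\'eal nul de $\Q[x]/(x^2+1)$). Ce qui sauve la situation est la structure aux diff\'erences : par un argument de longueur minimale (du m\^eme type que la preuve du Lemme \ref{lemm:lindisj}), tout $(\sg,\partial)$-id\'eal de $R\otimes_\cK\wtilde\cK$, pour $R$ un anneau $(\sg,\partial)$-simple de constantes $R^\sg=\cK$ et $\sg$ agissant trivialement sur $\wtilde\cK$, est engendr\'e par sa trace sur $1\otimes\wtilde\cK$ ; comme $\wtilde\cK$ est un corps, la simplicit\'e suit. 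Encore faut-il v\'erifier que $(\cR^\p_\cM)^\sg=\cK$, ce qui n'est pas gratuit puisque $\cK$ n'est ni alg\'ebriquement ni diff\'erentiellement clos (\cf Remarque \ref{rema:descente}) ; vous passez ce point sous silence.

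Pour $\cR_\cM\otimes_\cK\wtilde\cK$, l'argument par comparaison des degr\'es de $\partial$-transcendance est \`a la fois circulaire et insuffisant. Circulaire, car l'\'egalit\'e entre le degr\'e de $\partial$-transcendance de $\cR_\cM/\cF$ et la $\partial$-dimension de $Gal^\partial(A)$ est pr\'ecis\'ement ce que le lemme sert \`a \'etablir (c'est par lui que le Th\'eor\`eme \ref{prop:degtrs} est transport\'e aux solutions m\'eromorphes) : rien n'exclut a priori que les solutions m\'eromorphes satisfassent moins de relations diff\'erentielles que ne le pr\'edit le groupe. Insuffisant, car m\^eme \`a degr\'es de $\partial$-transcendance \'egaux, une surjection de $\partial$-alg\`ebres de type fini n'est pas n\'ecessairement injective : la $\partial$-dimension peut \^etre pr\'eserv\'ee par un quotient propre, c'est le polyn\^ome de Kolchin qui d\'ecro\^it. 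L'argument correct exploite de fa\c{c}on essentielle le plongement $\cR_\cM\subset\cM$ avec $\cM^\sg=\cK$ : le calcul de longueur minimale, men\'e dans $\cM\otimes_\cK\wtilde\cK$ o\`u les coefficients non nuls sont inversibles, ram\`ene tout $(\sg,\partial)$-id\'eal non nul de $\cR_\cM\otimes_\cK\wtilde\cK$ \`a un id\'eal rencontrant $\cR_\cM\otimes 1$, et c'est la comparaison avec le quotient abstrait $\cR^\p_\cM$ (via la surjection d'\'evaluation $X\mapsto U$) qui permet alors de conclure ; c'est le ressort des preuves cit\'ees, et il manque \`a votre plan.
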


\begin{proof}
Voir le Corollaire 3.3 et la Proposition 3.4 dans \cite{diviziohardouinComp} (et \cite{ChatHardouinSinger}
pour le cas non paramétré), dans le cas
$(\cF,\sg,\partial)=(\cE_q(x),\sgq,x\frac{d}{dx})$. La preuve se généralise sans difficulté.
\end{proof}

Moralement, la proposition précédente dit que les groupes
$Aut^{\sg,\partial}(\cR_\cM/\cF)$,
$Aut^{\sg,\partial}(\cR_\cM^\p/\cF)$ et
$Aut^{\sg,\partial}(\cR/\wtilde\cF)$ coïncident. Cette affirmation n'a pas vraiment
de sens car les deux premiers groupes peuvent ne pas avoir beaucoup d'éléments, à cause du
fait que $\cK$ n'est pas différentiellement clos. Il est par contre possible de donner un sens
rigoureux à cette affirmation en utilisant les schémas en groupes et les catégories tannakiennes différentielles,
introduite dans \cite{ovchinnikovdifftannakian} (voir aussi \cite{kamensky2009model}, \cite{kamensky2011tannakian} and \cite{GilletGorchinskyOvchinnikov}).
En effet, chacun de ces anneaux détermine un foncteur fibre pour la catégorie tannakienne différentielle
engendrée par le module aux différences associé à $\sg Y=AY$. Les schémas en groupes des automorphismes
tensoriels de ces foncteurs deviennent tous isomorphes deux à deux sur $\wtilde\cK$.
On en déduit:

\begin{theo}
Il existe un $\partial$-groupe algébrique $G_\cK$ défini sur $\cK$ tel que
$Aut^{\sg,\partial}(\cR_\cM/\cF)$ est le groupe de $\cK$-points de
$G_\cK$ et que $G_\cK\otimes\wtilde\cK\cong Gal^\partial(A)$.
\end{theo}

Ceci nous permet de donner une preuve d'un analogue de la Proposition
\ref{prop:gatrans} sur un corps de fonctions méromorphes, qui est cachée entre la Proposition 3.1 et
le Corollaire 3.2 de \cite{HardouinSinger} (voir aussi \cite{hardouincompositio}).
Une fois de plus, on utilise de façon cruciale la classification des sous-groupes
différentiels de $\mathbb G_a^n$ dans \cite{cassdiffgr}.

\begin{prop}\label{prop:gatransmero}
Soient  $a_1, ..., a_n$ des éléments non nuls de $\cF$.
Si $z_1,...,z_n \in\cM$ satisfont aux équations aux différences
$\sg (z_i) -z_i =a_i$, pour $i=1,...,n$,
alors $z_1,...,z_n$ satisfont à une $\partial$-relation
différentielle sur $\cF$
si et seulement s'il existe un polynôme différentiel linéaire homogène non nul
$L(Y_1,...,Y_n)$ à coefficients dans $\cK$ et un élément $f \in\cF$ tels que
$L(a_1, ..., a_n) =\sg (f)-f$.
\end{prop}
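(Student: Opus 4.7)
The approach is to mimic the proof of Proposition \ref{prop:gatrans} by working in the abstract Picard-Vessiot extension $\cR$ over $\wtilde\cF$, where the constants $\wtilde\cK$ are differentially closed and that proposition applies, and then to descend along the isomorphism $\cR_\cM \otimes_\cK \wtilde\cK \cong \cR$ of the preceding lemma. The theorem just stated is the crucial ingredient: it guarantees that the relevant Galois group is already defined over $\cK$ itself, not merely over $\wtilde\cK$, which is what will let us find the data $L$ and $f$ in the correct rings.

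The ``if'' direction is direct. Given $L \in \cK\{Y_1,\ldots,Y_n\}$ nonzero, homogeneous linear, and $f \in \cF$ with $L(a_1,\ldots,a_n) = \sg(f) - f$, the fact that the coefficients of $L$ lie in $\cK = \cM^\sg$ and that $\sg$ commutes with $\partial$ gives
$$
\sg\bigl(L(z_1,\ldots,z_n)\bigr) = L(\sg z_1,\ldots,\sg z_n) = L(z_1+a_1,\ldots,z_n+a_n) = L(z_1,\ldots,z_n) + L(a_1,\ldots,a_n),
$$
so $\sg(L(z_1,\ldots,z_n) - f) = L(z_1,\ldots,z_n) - f$; hence $L(z_1,\ldots,z_n) - f \in \cM^\sg = \cK$, which is a nontrivial $\partial$-algebraic relation over $\cF$.

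For the converse, the system $\sg Y = AY$ built from the scalar equations $\sg z_i - z_i = a_i$ is upper unitriangular, so $Gal^\partial(A)$ acts on $(z_1,\ldots,z_n)$ by translation and is a $\partial$-subgroup of $\mathbb G_a^n(\wtilde\cK)$. By the theorem above, $Gal^\partial(A) = G_\cK(\wtilde\cK)$ for a $\partial$-subgroup $G_\cK \subset \mathbb G_a^n$ defined over $\cK$. If $z_1,\ldots,z_n$ satisfy a nontrivial $\partial$-relation over $\cF$, the same relation persists in $\cR \cong \cR_\cM \otimes_\cK \wtilde\cK$ over $\wtilde\cF$ via the injection $\cR_\cM \hookrightarrow \cR$, and Proposition \ref{prop:degtrs} then forces $\dim_\partial G_\cK = \dim_\partial(G_\cK \otimes \wtilde\cK) < n$; hence $G_\cK \subsetneq \mathbb G_a^n$ over $\cK$. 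The classification of $\partial$-subgroups of $\mathbb G_a^n$ \cite{cassdiffgr} (valid over any $\partial$-field) then produces a nonzero homogeneous linear $\partial$-polynomial $L \in \cK\{Y_1,\ldots,Y_n\}$ in the defining ideal of $G_\cK$.

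The main obstacle is then to exhibit an $f \in \cF$, not merely in $\wtilde\cF$. The natural candidate is $f := L(z_1,\ldots,z_n) \in \cR_\cM$, which by the computation of the ``if'' direction already satisfies $\sg(f) - f = L(a_1,\ldots,a_n)$; what remains is to prove $f \in \cF$. Under the isomorphism $\cR_\cM \otimes_\cK \wtilde\cK \cong \cR$, the element $f \otimes 1$ corresponds to $L$ evaluated at the $\cR$-avatars of the $z_i$, which is $Gal^\partial(A)$-invariant since $L$ vanishes on $G_\cK \otimes \wtilde\cK$ and the action is by translation; hence it lies in $\wtilde\cF = Frac(\cF \otimes_\cK \wtilde\cK)$ by the usual Galois correspondence over the differentially closed base. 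Writing $f \otimes 1 = p/q$ with $p, q \in \cF \otimes_\cK \wtilde\cK$ and decomposing $p = \sum_i p_i \otimes \beta_i$, $q = \sum_i q_i \otimes \beta_i$ in a $\cK$-basis $\{\beta_i\}$ of $\wtilde\cK$ (legitimate by the linear disjointness of Lemma \ref{lemm:lindisj}), the identity $f \cdot q = p$ in the free $\cR_\cM$-module $\cR_\cM \otimes_\cK \wtilde\cK$ unfolds termwise to $f\, q_i = p_i$ in $\cR_\cM$ for every $i$; since $q \neq 0$, some $q_{i_0} \in \cF^*$ and $f = p_{i_0}/q_{i_0} \in \cF$, completing the proof.
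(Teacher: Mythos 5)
Votre preuve est correcte et suit essentiellement la m\^eme d\'emarche que celle du texte : syst\`eme unipotent par blocs, groupe de Galois vu comme $\partial$-sous-groupe propre de $\mathbb G_a^n$ d\'efini sur $\cK$, classification de \cite{cassdiffgr} pour obtenir $L$, puis $f=L(z_1,\dots,z_n)$ invariant sous le groupe. Vous explicitez de surcro\^it, via la disjonction lin\'eaire du Lemme \ref{lemm:lindisj}, la descente de $f\in\wtilde\cF$ \`a $f\in\cF$ que le texte r\'esume par \og un argument galoisien \fg{}, ce qui est bienvenu.
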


\begin{rema}
Pour $\sg=\tau$, on retrouve une preuve du Théorème \ref{theo:BankKaukmannGamma},
avec l'hypothèse supplémentaire que $\cP(x)\subset\cF$. On reviendra sur le problème
de descente de $\cP$ à $\C(x)$.
\par
On en déduit aussi immédiatement que toute solution méromorphe de l'équation
$\Omega(x+1)=x\Omega(x)$ est différentiellement transcendante sur
$\cP$, ce qui prouve le Corollaire \ref{coro:markuszetagamma}.
\end{rema}

\begin{proof}
Une implication a déjà été prouvée dans la Remarque \ref{rema:implicationfacile}.
Considérons l'anneau $\cR_\cM$ associé au système aux différences
$\sg Y=AY$, où $A$ est une matrice diagonale par blocs:
$$
A=diag\l(\begin{pmatrix}1&a_1\\0&1\end{pmatrix},\dots,
\begin{pmatrix}1&a_n\\0&1\end{pmatrix}\r).
$$
Une matrice fondamentale de solutions de $\sg Y=AY$ est donnée par:
$$
U=diag\l(\begin{pmatrix}1&z_1\\0&1\end{pmatrix},\dots,
\begin{pmatrix}1&z_n\\0&1\end{pmatrix}\r)\in  GL_{2n}(\cM).
$$
Il s'ensuit que $Gal^\partial(A)$ est un $\partial$-sous-groupe de $\mathbb G_a^n$ défini sur $\cK$.
Par hypothèse, c'est un sous-groupe propre (\cf Proposition \ref{prop:degtrs}).
Il existe donc un polynôme différentiel linéaire homogène non nul
$L(Y_1,...,Y_n)$ à coefficients dans $\cK$, contenu dans l'idéal de définition de
$Gal^\partial(A)$.
On pose $f=L(z_1,\dots,z_n)\in\cR_\cM$.
Un argument galoisien montre que $f$ est invariant par l'action de $Gal^\partial(A)$ et donc que $f\in\cF$.
On en déduit que
$$
0=\sg(L(z_1,\dots,z_n)-f)-(L(z_1,\dots,z_n)-f)
=L(a_1,\dots,a_n)-(\sg(f)-f).
$$
Pour plus de détails voir la Proposition 3.1 dans \cite{HardouinSinger}.
\end{proof}

\begin{coro}[Corollaire 3.2 dans \cite{HardouinSinger}; \cite{hardouincompositio}]
Soient  $a_1, ..., a_n$ des éléments non nuls de $\C(x)$ et
$z_1,...,z_n \in\cM$ des solutions méromorphes des équations aux différences
$\sg (z_i) -z_i =a_i$, pour $i=1,...,n$.
Les assertions suivantes sont équivalentes:
\begin{enumerate}
\item
Les fonctions $z_1,...,z_n$ satisfont à une $\partial$-relation
différentielle sur $\cK(x)$.
\item
Il existe un polynôme différentiel linéaire homogène non nul
$L(Y_1,...,Y_n)$ à coefficients dans $\cK$ et un élément $f \in \cK(x)$ tels que
$L(a_1, ..., a_n) =\sg (f)-f$.
\item
Les fonctions $z_1,...,z_n$ satisfont à une $\partial$-relation
différentielle sur $\C(x)$.
\item
Il existe un polynôme différentiel linéaire homogène non nul
$L(Y_1,...,Y_n)$ à coefficients dans $\C$ et un élément $f \in \C(x)$ tels que
$L(a_1, ..., a_n) =\sg (f)-f$.
\end{enumerate}
\end{coro}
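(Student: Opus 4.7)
My plan is to build the four equivalences from two applications of Proposition \ref{prop:gatransmero} and one descent step. First, Proposition \ref{prop:gatransmero} applied to $\cF = \cK(x)$ (which sits in $\cM$ with $\sigma$-invariants exactly $\cK$) gives $1 \Leftrightarrow 2$ at once. Second, the same proposition applied with $\cF = \C(x)$ gives $3 \Leftrightarrow 4$: although $\C(x)$ is not literally in the class of base fields for which Proposition \ref{prop:gatransmero} is stated, its proof carries over verbatim because $\C(x)^\sigma = \C$ and, since $A$ has coefficients in $\C(x)$, the tannakian construction of the preceding theorem descends the parametric Galois group from $\cK$ to a $\partial$-algebraic group defined over $\C$. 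The implication $4 \Rightarrow 2$ is trivial from $\C \subset \cK$ and $\C(x) \subset \cK(x)$.

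The heart of the proof is the descent $2 \Rightarrow 4$. Fix a $\C$-basis $(e_i)_{i \in I}$ of $\cK$. Because $x$ is transcendental over $\cK$ (any algebraic relation would vanish on the infinite $\sigma$-orbit of $x$), the $e_i$ remain $\C(x)$-free, and $\cK \otimes_\C \C(x) = \bigoplus_i e_i\, \C(x)$ is a well-defined subring of $\cK(x)$. Expanding $L = \sum_i e_i L_i$ with $L_i$ of coefficients in $\C$ (finitely many nonzero), one has $L(\underline{a}) = \sum_i e_i L_i(\underline{a}) \in \cK \otimes_\C \C(x)$. The key rationality lemma is that any $h \in \cK \otimes_\C \C(x)$ of the form $\sigma(g) - g$ with $g \in \cK(x)$ is already of that form with $g \in \cK \otimes_\C \C(x)$. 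Granting it, write $f = \sum_i e_i f_i$ with $f_i \in \C(x)$; since $\sigma$ fixes each $e_i$, comparing the $e_i$-components of $\sigma(f) - f = L(\underline{a})$ yields $L_i(\underline{a}) = \sigma(f_i) - f_i$ for every $i$, and at least one $L_i$ is nonzero, which is exactly 4.

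The main obstacle is the rationality lemma. I would prove it by the partial-fraction decomposition of $g$ in $\cK(x) = \mathrm{Frac}(\cK[x])$ along the $\sigma$-orbits of its poles: on each orbit the principal parts of $g$ live in a finite-dimensional $\cK$-space, and the constraint $\sigma(g) - g \in \cK \otimes_\C \C(x)$ forces the components transverse to $\cK \otimes_\C \C(x)$ to telescope away; a final correction of $g$ by an element of $\cK$ (which does not affect $\sigma(g) - g$) then brings $g$ into $\cK \otimes_\C \C(x)$. The same circle of ideas, applied once more to $\varphi = L(z_1,\ldots,z_n) - f \in \cK$, underlies the identification between the parametric Galois groups over $\C(x)$ and over $\cK(x)$ invoked in step $3 \Leftrightarrow 4$.
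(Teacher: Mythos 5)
Your architecture $1\Leftrightarrow 2$ (Proposition \ref{prop:gatransmero} over $\cF=\cK(x)$), $4\Rightarrow 2$ (trivial), $2\Rightarrow 4$ (descente) is sound, but the way you connect statement 3 to the others has a genuine gap. You cannot apply Proposition \ref{prop:gatransmero} with $\cF=\C(x)$, and the problem is not a formality: the meromorphic Galois formalism of the paper requires $\cF^\sg=\cM^\sg=\cK$, i.e. $\cF\supseteq\cK(x)$. Over the base $\C(x)$, the $(\sg,\partial)$-ring generated by $z_1,\dots,z_n$ inside $\cM$ acquires new $\sg$-constants --- for instance $L(z_1,\dots,z_n)-f$ itself, which is $\sg$-invariant but in general a transcendental periodic function --- so the Galois correspondence breaks exactly at the step you need: a Galois-invariant element need not lie in $\C(x)$. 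The ``tannakian descent of the group to $\C$'' you invoke is not established in the paper and is not automatic (the theorem on $G_\cK$ descends the group to $\cK$, not to $\C$). The repair is cheap and is what the paper does: $3\Rightarrow 1$ is a tautology, and $4\Rightarrow 3$ is obtained by the elementary computation of the Remarque \ref{rema:implicationfacile} (the function $L(z_1,\dots,z_n)-f$ is $\sg$-invariante); combined with your $1\Leftrightarrow 2\Leftrightarrow 4$ this closes the cycle without ever doing Galois theory over $\C(x)$.

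Your proof of $2\Rightarrow 4$, on the other hand, is correct in outline but follows a genuinely different route from the paper's. The paper replaces the coefficients of $L$ and $f$ by indeterminates, translates $L(a_1,\dots,a_n)=\sg(f)-f$ into a system of algebraic equations over $\C$, and concludes that a $\cK$-point yields a $\C$-point because $\C$ is algebraically closed --- a soft specialization argument that requires no analysis of $f$. You instead decompose along a $\C$-basis $(e_i)$ of $\cK$ (your justification that the $e_i$ stay free over $\C(x)$, via the transcendence of $x$ over $\cK$, is correct) and reduce everything to a rationality lemma for $\sg(g)-g=h$, proved by partial fractions along the $\sg$-orbits of the poles. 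That lemma is true and your telescoping idea is the right one, but it is where all the work lies and you only sketch it: one must handle poles lying in an algebraic closure of $\cK$, show that orbits not meeting $\C$ carry no principal parts at all, and treat the polynomial part separately. What your route buys is an explicit, constructive $L_i$ and $f_i$ with no appeal to the Nullstellensatz; what the paper's route buys is brevity and uniformity (it works verbatim for $\tau$ and for $\sg_q$, and the non-vanishing of $L$ is absorbed by adding one inequation to the system).
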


\begin{proof}
La proposition précédente donne l'équivalence entre 1. et 2.
L'implication $4.\Rightarrow 3.$ se prouve comme la Remarque \ref{rema:implicationfacile} et
l'implication $3.\Rightarrow 1.$ est tautologique.
Il ne nous reste qu'a démontrer que $2.\Rightarrow 4.$
Pour cela on va utiliser un argument de descente classique.
On considère un polynôme différentiel linéaire homogène $\wtilde L$ et une fonction rationnelle
$\wtilde f$ en $x$, obtenus des $L$ et $f$ en remplaçant leurs coefficients (dans $\cK$) par des coefficients
génériques. L'identité $L(a_1, ..., a_n) =\sg (f)-f$ se traduit en une série
d'équations algébriques en les coefficients de $\wtilde L$ et $\wtilde f$,
à coefficients dans $\C$. Ces équations ont une solution dans $\cK$, car
$L$ et $f$ existent par hypothèse. On conclut qu'elles doivent avoir une solution dans $\C$, puisque $\C$ est algébriquement
clos. Ceci termine la preuve.
\end{proof}

%%%%%%%%%%%%%%%%%%%%%%%%%%%%%%%%%%%%%%%%%%%%%%%%%%%%%%%%%%%%%%%%%%%
%%%%%%%%%%%%%%%%%%%%%%%%%%%%%%%%%%%%%%%%%%%%%%%%%%%%%%%%%%%%%%%%%%%
\section{Le cas des équations aux $q$-différences}
\label{sec:qdiff}
%%%%%%%%%%%%%%%%%%%%%%%%%%%%%%%%%%%%%%%%%%%%%%%%%%%%%%%%%%%%%%%%%%%
%%%%%%%%%%%%%%%%%%%%%%%%%%%%%%%%%%%%%%%%%%%%%%%%%%%%%%%%%%%%%%%%%%%

Les résultats du paragraphe précédent s'appliquent aussi bien aux équations
aux différences finies qu'aux équations aux $q$-différences.
Considérons un nombre complexe $q$ tel que $|q|>1$ et la fonction Theta de
Jacobi
$$
\theta_q(x)=\sum_{n\in\Z}q^{-n(n-1)/2} x^n.
$$
Elle vérifie l'équation aux $q$-différences $y(qx)=qxy(x)$.
La dérivée logarithmique $\ell_q(x)$ de $\theta_q(x)$ par rapport à la dérivation
$\partial=x\frac{d}{dx}$ vérifie l'équation
$$
\ell_q(qx)=\ell_q(x)+1.
$$
Il s'ensuit que $\partial(\ell_q)\in\cE_q$ et que, sans surprise, la fonction Theta de Jacobi est différentiellement
algébrique.
Si on avait voulu appliquer la Proposition \ref{prop:gatransmero} à l'équation de $\ell_q(x)$
il aurait suffit de  poser $f=\partial(\ell_q)$ et $L=\partial$.

\begin{rema}
L'algébricité différentielle de $\Theta_q$ est équivalente
au fait que le $\partial$-groupe de Galois $Gal^\partial(qx)$ est un sous-groupe différentiel
propre de $\mathbb G_m$.
Dans le cas différentiel, $\mathbb G_m$ se plonge dans $\mathbb G_a$ grâce à la dérivée logarithmique
$z\mapsto \partial(z)/z$. On peut prouver que les sous-groupes différentiels propres non finis
de $\mathbb G_m$, définis sur $\cE_q$,
ont un idéal de définition engendré par un nombre fini d'équations différentielles $\cL(\partial(z)/z)=0$, où
$\cL$ est un opérateur différentiel linéaire dans $\cE_q[\partial]$.
Il n'est pas difficile de voir que $Gal^\partial(qx)\subset\l\{\partial\l(\frac{\partial z}{z}\r)=0\r\}\subset\mathbb G_m$.
\end{rema}

Une problématique propre aux équations aux $q$-différences est celle liée
à la dépendance différentielle en $q$ des solutions, lorsque
$q$ est un paramètre (voir \cite{diviziohardouinPacific}).
Par exemple, si on pose $\partial_q=q\frac{d}{dq}$ et $\partial_x=x\frac{d}{dx}$,
la fonction Theta de Jacobi vérifie l'équation aux dérivées partielles
$$
2\partial_q\theta_q=-\partial_x^2\theta_q+\partial_x\theta_q,
$$
laquelle est, à un changement de variable près, l'équation de la chaleur.
Il est possible de déduire des arguments ci-dessus qu'il n'y a guère que la
fonction $\theta_q$ qui vérifie une équation aux $q$-différences d'ordre $1$ à coefficients dans $\C(x)$
et qui satisfait à des relations différentielles par rapport à $\partial_q,\partial_x$.
\par
Commençons par formaliser le cadre.
On considère le corps $\C(q)$ avec la norme $q^{-1}$-adique, c'est-à-dire qu'on fixe un
réel $d>1$ et pour tout $f(q),g(q)\in\C[q]$, avec $g(q)\neq 0$ on pose:
$$
\l|\frac{f(q)}{g(q)}\r|=d^{\deg_q f-\deg_q g}.
$$
Ceci définie une norme ultramétrique sur $\C(q)$ qui s'étend à la plus petite extension normée
$\cC$ de $\C(q)$, complète et algébriquement close. On peut alors considérer
les fonctions méromorphes $\cM$ sur $\cC^*=\cC\smallsetminus\{0\}$, qui sont les quotients
de séries entières à coefficients dans $\cC$, ayant un rayon de convergence infini.
Les opérateurs $\sgq,\partial_q,\partial_x$ s'étendent naturellement à $\cM$ et
on peut considérer le corps $\cE_q$ des fonctions elliptiques, \ie $\sgq$-invariantes, de $\cM$.
\par
Nous allons considérer le corps des fonctions méromorphes
$\cF=\cE_q(x,\ell_q(x))\subset\cM$.
Puisque $\ell_q(qx)=\ell_q(x)+1$, le corps $\cF$ est stable par $\sgq$.
Évidemment le triplet $(\cF,\sgq,\partial_x)$ se comporte exactement comme les corps considérés
dans la section précédente, bien que la nature des fonction méromorphes dans ce contexte soit un peu différente.
Bien sûr, le corps des $\sgq$-invariants de $\cF$ coïncide avec $\cE_q$.
\par
Si on pose $\de=\ell_q(x)\partial_x+\partial_q$, on peut vérifier que $\de$ commute avec $\sgq$ (voir Lemme 2.1 dans \cite{diviziohardouinPacific}), que
$\de(\ell_q)\in\cE_q$ et que, donc, elle laisse $\cF$ stable dans $\cM$.
Il s'ensuit qu'aussi le triplet $(\cF,\sgq,\de)$ est de la même nature que les corps
différentiels/aux différences considérés précédemment. Son sous-corps des $\sgq$-invariants est toujours $\cE_q$.

\begin{rema}
On déduit de l'équation $\ell_q(qx)=\ell_q(x)+1$ que $\de\ell_q(x)\in\cE_q$, ce qui prouve que
$\theta_q(x)$ vérifie une équation différentielle non banale en $\de$.
Comme on l'a déjà remarqué, ceci est équivalent au fait que le $\de$-groupe de Galois $Gal^\de(qx)$ est un sous-groupe différentiel
propre de $\mathbb G_m$.
Le calcul du $\de$-groupe de Galois $Gal^\de(qx)$ est étroitement lié à l'équation de la chaleur
(voir (2-3) dans \cite{diviziohardouinPacific}).
\end{rema}

La Proposition \ref{prop:gatransmero} est valable pour $(\cF,\sgq,\partial_x)$ et pour
$(\cF,\sgq,\de)$, avec exactement la même preuve (voir le Corollaire 2.5 dans \cite{diviziohardouinPacific}).
Puisque $\cF$ est une extension purement transcendante de $\cE_q$, on peut en déduire, par
un argument élémentaire de décomposition en éléments simples, la proposition suivante:

\begin{prop}
Soient $a(x)\in\C(q,x)$ et $u\in\cM$ une solution de $y(qx)=a(x)y(x)$.
Les affirmations suivantes sont équivalentes:
\begin{enumerate}
\item
Il existe $r\in\Z$, $g(x)\in\C(q,x)$ et $\mu\in\C(q)$ tels que
$a(x)=\mu x^rg(qx)/g(x)$.
\item
La fonction $u$ est solution d'une équation différentielle algébrique
non triviale sur $(\cF,\partial_x)$ (et donc sur $\cC(x)$).
\item
La fonction $u$ est solution d'une équation différentielle algébrique
non triviale sur $(\cF,\de)$  (et donc sur $\cC(x)$).
\end{enumerate}
\end{prop}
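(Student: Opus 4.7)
The plan is to prove $(1) \Rightarrow (2), (3)$ together, and then the two converses by the same template with $\de$ in the role of $\partial_x$.

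For the forward direction, assume $a = \mu x^r g(qx)/g(x)$. The substitution $u = g\tilde u$ reduces $\sgq u = au$ to $\sgq \tilde u = \mu x^r \tilde u$; any two meromorphic solutions of the latter differ multiplicatively by an element of $\cE_q \subset \cF$, so it suffices to exhibit one particular solution $\tilde u_0$ with $\partial_x \tilde u_0/\tilde u_0 \in \cF$ and $\de \tilde u_0/\tilde u_0 \in \cF$. A direct calculation, modelled on the treatment of $\theta_q$ just after the statement, shows that $\partial_x \tilde u_0/\tilde u_0 - r \ell_q - \partial_x g/g$ is $\sgq$-invariant (hence in $\cE_q$), and similarly with $\de$ in place of $\partial_x$, using $\de \ell_q \in \cE_q$. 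This provides a first-order algebraic differential relation for $u$ over $\cF$.

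For the converse $(2) \Rightarrow (1)$, set $z = \partial_x u/u$. Differentiating $\sgq u = au$ and dividing yields $\sgq(z) - z = \partial_x a/a =: b \in \cC(x) \subset \cF$. Differential algebraicity of $u$ over $\cF$ implies that of $z$, so Proposition \ref{prop:gatransmero} (with $n = 1$) produces a nonzero homogeneous linear differential polynomial $L$ with coefficients in $\cE_q$ and an element $f \in \cF$ such that $L(b) = \sgq(f) - f$. Exploiting the pure transcendence $\cF = \cE_q(x)(\ell_q)$, expand $f$ as a rational function of $\ell_q$ over $\cE_q(x)$ and compare powers of $\ell_q$ on both sides, using $\sgq \ell_q = \ell_q + 1$. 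Since $L(b) \in \cE_q(x)$ has $\ell_q$-degree zero, this cascade forces $f$ to reduce, modulo the image of $\sgq - 1$, to the form $P(x) + r_0 \ell_q$ with $P \in \cE_q(x)$ and $r_0 \in \cE_q$; whence $L(b) = r_0 + (\sgq - 1)(P)$ in $\cE_q(x)$. A further partial-fraction decomposition of $P$, together with a residue count at the poles of $b$ within each $q$-orbit, then produces an identity $b - r = (\sgq - 1)(h)$ with $r \in \cE_q$ and $h = \partial_x \tilde g/\tilde g$, $\tilde g \in \cE_q(x)$.

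It remains to descend from $\cE_q$ to $\C(q)$: since $b$ has coefficients in $\C(q)$, an argument identical to that of the final Corollary of the preceding section (using the algebraic closedness of $\C$) allows $r$ and $\tilde g$ to be chosen in $\C(q)$ and $\C(q,x)$ respectively, and the integrality $r \in \Z$ follows from the multiplicities of $a$ at the poles of $\tilde g$ (equivalently, from the interpretation of $r$ as the $x$-valuation exponent attached to $a$ modulo the $q$-action). Integrating $b - r = (\sgq - 1)(\partial_x \tilde g/\tilde g)$ then yields $a = \mu x^r g(qx)/g(x)$ up to a multiplicative constant $\mu \in \C(q)$, giving condition $(1)$. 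The implication $(3) \Rightarrow (1)$ is verbatim the same argument with $\de$ in place of $\partial_x$, using that $\de$ preserves $\cF$ and commutes with $\sgq$. The main obstacle is the partial-fraction step: extracting from the abstract relation $L(b) = \sgq(f) - f$ — where $f$ may genuinely involve $\ell_q$ — a concrete logarithmic-derivative decomposition of $b$ with rational coefficients, and carefully tracking the integrality of the resulting exponent $r$.
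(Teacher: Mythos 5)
Your overall route is the paper's: for the converse, pass to $z=\partial u/u$, apply Proposition \ref{prop:gatransmero} to $\sgq(z)-z=\partial a/a$, then eliminate $\ell_q$ and do a partial-fraction/residue analysis in $x$ over $\cE_q$ (this is exactly the \og argument \'el\'ementaire de d\'ecomposition en \'el\'ements simples\fg{} the paper invokes), while the direct implication is read off the explicit solution $\theta_q(\mu x/q^r)\theta_q(x)^{r-1}g(x)$ given in the paper's remark. Two of your steps, however, hide real work and one is misstated. First, $(3)\Rightarrow(1)$ is \emph{not} verbatim the $\partial_x$ argument: there $b_\de=\de a/a=\ell_q\,\partial_x a/a+\partial_q a/a$ already has degree $1$ in $\ell_q$, and $\de$ raises the $\ell_q$-degree (e.g.\ $\de(x)=\ell_q x$, and $\de(\ell_q h)$ has a term $\ell_q^2\partial_x h$), so $L(b_\de)$ is no longer $\ell_q$-free and the whole comparison of powers of $\ell_q$ must be redone; this is precisely why the paper sends $(1)\Leftrightarrow(2)$ to le Th\'eor\`eme 1.1 de \cite{HardouinSinger} but $(1)\Leftrightarrow(3)$ to la Proposition 2.7 de \cite{diviziohardouinPacific}. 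The strategy survives, the computation does not. Second, your descent step cannot be \og identical\fg{} to the corollary of the preceding section: that argument descends from a field to an \emph{algebraically closed} subfield of constants, whereas here you must descend from $\cC$ (or $\overline{\C(q)}$) to $\C(q)$, which is not algebraically closed. The correct repair is to observe that the telescoping condition extracted from the residue count --- the multiplicities of $\mathrm{div}(a)$ sum to zero on each $q$-orbit in $\overline{\C(q)}^*$ --- is intrinsic to $a$ and Galois-stable over $\C(q)$, so that a canonical choice of $g$ built from $\mathrm{div}(a)$ is Galois-invariant, hence lies in $\C(q,x)$; the same reading of $\mathrm{div}(a)$ at $0$ and $\infty$ is what gives $r\in\Z$. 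Neither point is fatal, but as written both are gaps rather than omitted routine details.
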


\begin{rema}
L'équivalence entre la première et la deuxième assertion est le Théorème 1.1 dans \cite{HardouinSinger},
alors que l'équivalence entre la première et la troisième affirmation est prouvée dans
la Proposition 2.7 de \cite{diviziohardouinPacific}.
\par
Une solution méromorphe de $y(qx)=a(x)y(x)$, avec $a(x)=\mu x^rg(qx)/g(x)$,
est donnée par:
$$
\theta_q(\mu x/q^r)\theta_q(x)^{r-1}g(x)\in\cM.
$$
Il n'y a, donc, guère que la fonction Theta de Jacobi, qui soit solution d'une équation aux $q$-différences d'ordre $1$ et
qui ait des propriétés d'algébricité différentielle
non banales par rapport a $\partial_x,\partial_q$.
\par
Signalons le fait qu'on peut aussi étudier l'intégrabilité des systèmes aux $q$-différences d'ordre $>1$ par rapport à
$\partial_x$ et $\partial_q$ (voir le Corollaire 2.9 dans \cite{diviziohardouinPacific}).
\end{rema}

%%%%%%%%%%%%%%%%%%%%%%%%%%%%%%%%%%%%%%%%%%%%%%%%%%%%%%%%%%%%%%%%%%%
%%%%%%%%%%%%%%%%%%%%%%%%%%%%%%%%%%%%%%%%%%%%%%%%%%%%%%%%%%%%%%%%%%%
\section{Quelques mots sur ce que ce survol ne contient pas}
%%%%%%%%%%%%%%%%%%%%%%%%%%%%%%%%%%%%%%%%%%%%%%%%%%%%%%%%%%%%%%%%%%%
%%%%%%%%%%%%%%%%%%%%%%%%%%%%%%%%%%%%%%%%%%%%%%%%%%%%%%%%%%%%%%%%%%%

Ce survol est un introduction à des thématiques galoisiennes liées
aux équations aux différences et à la transcendance différentielle.
On a rapidement dû renoncer à la velléité de donner une liste de références relativement complète
sur le sujet de la transcendance différentielle, car la littérature est tentaculaire.
L'article de survol de Rubel \cite{Rubelsurvey}, ainsi que \cite{Rubelpbs1} et \cite{Rubelpbs2}, fournissent
une jolie vue panoramique des travaux plus classiques. On renvoie le lecteur à ces articles et à leur bibliographie.
On signale aussi:
\begin{itemize}
\item
Dans [a], [b], [c] on trouvera un approche effectif \`a la transcendance diff\'erentielle, dans un style
diophantien.
\item
Dans \cite{MarkusZetaGamma} on trouve une allusion aux liens entre transcendance différentielle
et dynamique holomorphe. Sur ce point la littérature semble se limiter aux articles \cite{Bergweilerpbrubel,BeckerBergweiler}
\item
En combinatoire, il arrive qu'on se demande si des séries qui proviennent d'un problème énumératif, et qui en général sont solutions d'une équation aux différences,
sont aussi solutions d'une équation différentielle, linéaire ou pas. Ceci a pour but d'obtenir
des informations sur les récurrences qui engendrent les séries en question.
On pourra citer à titre d'exemple \cite{BousquetMishna}, \cite{BousquetPetk} et \cite{BostanKauers} et \cite{singerletter}.
\end{itemize}
Pour conclure on se limitera à faire une liste, quasiment en vrac, de quelques résultats en relation avec le sujet principal de ce texte.
\par
Dans \S\ref{sec:Holder}, on a déjà beaucoup parlé d'équations aux différences finie, associées à la translation $x\mapsto x+1$.
En ce qui concerne les équations aux $q$-différences, associées à l'homothétie $x\mapsto qx$,
nous avons d'un côté les résultats de rationalité des séries formelles solutions des systèmes
d'équations aux $q$-différences/différentiels \cite{RamisToulouse} et des systèmes d'équations
aux $q$-différences/$q^\p$-différences \cite{BezivinBoutabaa}. La rationalité des solutions
est aussi étudiée dans \cite{DVInv} et \cite{diviziohardouinqGroth}, par des méthodes arithmétiques
inspirées de la conjecture de Grothendieck sur les $p$-courbures.
De l'autre, on a
le résultat de Ishizaki \cite{Ishizaki} sur l'hypertranscendance des solutions méromorphes des équations
de la forme $y(qx)=a(x)y(x)+b(x)$. Une premier approche galoisienne à ce sujet se trouve dans \cite{hardouincompositio},
suivi par le travail \cite{HardouinSinger}, sur lequel on s'est longuement étendu.
\par
La transcendance différentielle des fonctions de Mahler $f(x)=\sum_{n\geq 0}x^{k^n}$ est étudiée dans \cite{mahler}
et \cite{Loxtonpoorten}. La fonction $f$ est solution de l'équation fonctionnelle $f(x^k)=f(x)-x$.
La question de la transcendance différentielle des solutions de ce type d'équation fonctionnelle est étudiée, toujours
par des méthodes galoisiennes, dans la thèse de P. Nguyen, dont les résultats sont annoncés dans la note
\cite{pierre}. M. Singer a aussi prouvé des résultat dans cette direction \cite{singerletter}.
\par
Pour ce qui concerne les travaux en théorie de Galois paramétrée, il faut signaler que le point de départ a été la théorie
paramétrée des équations différentielles, développée dans \cite{cassisinger}. Le problème inverse a été étudié par M. Singer
\cite{singer2011linear}.
Pour cette théorie on dispose
d'une description du groupe de Galois dans le cas analytique \cite{dreyfus2012density}, dans l'esprit du théorème de densité de Ramis,
et d'un algorithme de Kovacic pour les équations différentielles d'ordre $2$ \cite{dreyfus2011kovacic}.
Signalons aussi l'étude de l'intégrabilité dans \cite{gorchinskiy2012isomonodromic}.
\par
La théorie de Galois paramétrée est liée aux catégories tannakiennes différentielles, introduites par
A. Ovchinnikov \cite{ovchinnikovdifftannakian,ovchinnikovDiffAlgPara} et par M. Kamesky \cite{kamensky2009model}.
Les questions liées à la descente peuvent être traitées via la théorie de Picard-Vessiot \cite{wibmer2011existence}
ou bien l'approche tannakienne \cite{GilletGorchinskyOvchinnikov}.
Par ailleurs, l'analogue de la conjecture de Grothendieck sur les $p$-courbures permet de donner
une caractérisation arithmétique du groupe de Galois intrinsèque \cite{diviziohardouinqGroth}
et de son analogue paramétré \cite{diviziohardouinqMalg} et de le
comparer avec les différentes théorie de Galois dans la littérature \cite{diviziohardouinComp}, en
complétant le travail de comparaison commencé dans \cite{ChatHardouinSinger}.
\par
Il est naturel de se demander si la théorie de Galois peut aider à analyser la transcendance d'une fonction et de ses itérées
par rapport à un automorphisme. Ceci fait l'objet de travaux en cours par l'auteur de ce texte, C. Hardouin et M. Wibmer, d'un côté, et
par A. Ovchinnikov, D. Trushin et M. Wibmer, de l'autre. La géométrie des variétés aux différences étant plus compliquée que la géométrie des variétés différentielles
(au sens de Kolchin), il y a beaucoup de difficultés. Dans cette direction, on citera aussi le travail de M. Kamesky \cite{kamensky2011tannakian}.
\par
De façon un peu surprenante, la théorie de Galois non linéaire \cite{MalgGGF,UmemuranonlinearGalois},
a été développée bien  avant la théorie de Galois paramétrée. Elle a été généralisée au cas des équations aux différences non linéaires
dans \cite{GranierFourier}. La théorie de Galois non linéaire généralise plutôt la théorie de Galois paramétrée que
la théorie de Galois ``classique'' des équations aux différences (voir Corollaire 4.10 dans \cite{diviziohardouinqMalg,diviziohardouinCRAS}).
Les deux papiers \cite{casaleroques,casaleroquesCrelle} mélangent la théorie linéaire des équations fonctionnelles et la théorie
non linéaire de Malgrange pour traiter des problèmes d'intégrabilité.
\par
Enfin, ils existe plusieurs approches différentes à la théorie de Galois des équations aux différences: voir par exemple \cite{SauloyENS} et \cite{andreens}.
Dans le cas particulier des équations aux $q$-différences, les travaux de J.-P. Ramis, J. Sauloy et C. Zhang étudient des questions galoisiennes
d'un point de vue beaucoup plus analytique \cite{RamisSauloySauvageI,RamisSauloySauvageII,ramis2009local}.

%%%%%%%%%%%%%%%%%%%%%%%%%%%%%%%%%%%%%%%%%%%%%%%%%%%%%%%%%%%%%%%%%%%
%%%%%%%%%%%%%%%%%%%%%%%%%%%%%%%%%%%%%%%%%%%%%%%%%%%%%%%%%%%%%%%%%%%
%%%%%%%%%%%%%%%%%%%%%%%%%%%%%%%%%%%%%%%%%%%%%%%%%%%%%%%%%%%%%%%%%%%
%%%%%%%%%%%%%%%%%%%%%%%%%%%%%%%%%%%%%%%%%%%%%%%%%%%%%%%%%%%%%%%%%%%
%\newpage
%\nocite{*}
\bibliography{difftransc,qG}

\end{document}